\newcommand{\N}{{\mathds{N}}}
\newcommand{\Z}{{\mathds{Z}}}
\newcommand{\R}{{\mathds{R}}}
\newcommand{\C}{{\mathds{C}}}
\newcommand{\T}{{\mathds{T}}}
\newcommand{\A}{{\mathfrak{A}}}
\newcommand{\bigslant}[2]{{\raisebox{.2em}{$#1$}\left/\raisebox{-.2em}{$#2$}\right.}}
\newcommand{\Lip}[1][L]{{\mathsf{#1}}}
\newcommand{\Hilbert}{{\mathscr{H}}}
\newcommand{\spectralpropinquity}[1]{{\mathsf{\Lambda}^{\mathsf{spec}}_{#1}}}
\newcommand{\Kantorovich}[1]{{\mathsf{mk}_{#1}}}
\newcommand{\StateSpace}{{\mathscr{S}}}
\newcommand{\qcms}{quantum compact metric space}
\newcommand{\sa}[1]{{\mathfrak{sa}\left({#1}\right)}}
\newcommand{\dom}[1]{{\operatorname*{dom}\left({#1}\right)}}
\newcommand{\norm}[2]{\left\|{#1}\right\|_{#2}}
\newcommand{\worknote}[1]{}
\newcommand{\opnorm}[3]{{\left|\mkern-1.5mu\left|\mkern-1.5mu\left| {#1} \right|\mkern-1.5mu\right|\mkern-1.5mu\right|_{#3}^{#2}}}
\newcommand{\alg}[1]{{\mathfrak{#1}}}
\newcommand{\nctwosolenoid}{\mathcal{A}^{2,p}_{\theta}}
\theoremstyle{plain}
\newtheorem{theorem}{Theorem}[section]
\newtheorem{lemma}[theorem]{Lemma}
\newtheorem{proposition}[theorem]{Proposition}
\newtheorem{theorem-definition}[theorem]{Theorem-Definition}
\theoremstyle{definition}
\newtheorem{definition}[theorem]{Definition}
\theoremstyle{remark}
\newtheorem*{acknowledgement*}{Acknowledgement}
\renewcommand{\geq}{\geqslant}
\renewcommand{\leq}{\leqslant}
\newcommand{\Dirac}{{\slashed{D}}}
\newcommand{\solenoid}{{\mathfrak s}}
\numberwithin{equation}{section}
\begin{document}
\title[]{Spectral Triples on noncommutative solenoids from the standard spectral triples on quantum tori}
\author{Carla Farsi}
\email{carla.farsi@colorado.edu}
\address{Department of Mathematics \\ University of Colorado at Boulder \\ Boulder CO 80309-0395}

\author{Fr\'{e}d\'{e}ric Latr\'{e}moli\`{e}re}
\email{frederic@math.du.edu}
\urladdr{http://www.math.du.edu/\symbol{126}frederic}
\address{Department of Mathematics \\ University of Denver \\ Denver CO 80208}

\author{Judith Packer}
\email{judith.jesudason@colorado.edu}
\address{Department of Mathematics \\ University of Colorado at Boulder \\ Boulder CO 80309-0395}

\date{\today}
\subjclass[2000]{Primary:  46L89, 46L30, 58B34.}
\keywords{Spectral triples, Noncommutative metric geometry, quantum Gromov-Hausdorff distance,  quantum compact metric spaces, spectral propinquity, quantum tori, noncommutative tori, noncommutative solenoids.}

\begin{abstract}
We address the natural question: as noncommutative solenoids are inductive limits of quantum tori, do the standard spectral triples on quantum tori converge to some spectral triple on noncommutative solenoid for the spectral propinquity? We answer this question by showing that, using appropriate bounded perturbations of the spectral triples on quantum tori, such a spectral triple on noncommutative solenoid can be constructed.
\end{abstract}
\maketitle


\section{Introduction}

A. Connes initiated \cite{Connes81,Connes,Connes89} a far-reaching program to adapt and extend classical tools from topology and Riemannian geometry to operator--algebraic noncommutative settings.  
As part of this program, Connes introduced the notion of a \textit{spectral triple} \cite{Connes89,Connes} which  in its most basic form, is given by  a unital $C^*$-algebra $\A$ represented on a Hilbert space $\Hilbert$, and a self-adjoint operator $\Dirac$ with compact resolvent, which boundedly commutes with a dense $\ast$-subalgebra $\A_{\Dirac}$ of $\A$. The fundamental example of such a spectral triple is given by the Dirac operator on a spin closed manifold. In particular, the operator $\Dirac$ is called the \emph{Dirac operator} of the spectral triple $(\A,\Hilbert,\Dirac)$.

Connes observed that a spectral triple $(\A,\Hilbert,\Dirac)$ induces an (extended, pseudo) metric on the state space $\StateSpace(\A)$ of $\A$ by setting, for any two states $\varphi,\psi \in \StateSpace(\A)$:
\begin{equation}\label{Connes-dist-eq}
	\Kantorovich{\Dirac}(\varphi,\psi) \coloneqq \sup\left\{ |\varphi(a) - \psi(a)| : a\in\A_{\Dirac}, \opnorm{[\Dirac,a]}{}{\Hilbert} \leq 1 \right\}
\end{equation}
where $\opnorm{T}{}{\Hilbert}$ is the operator norm of any bounded operator $T$. When this construction leads to an actual metric which metrizes
the weak$^*$-topology of $\StateSpace(\A)$, the spectral triple $(\A,\Hilbert,\Dirac)$ is called \emph{metric}.

\medskip

A metric spectral triple $(\A,\Hilbert,\Dirac)$ gives rise to a noncommutative analogue of the algebra of Lipschitz functions over a compact metric space, as studied by Rieffel \cite{Rieffel99, Rieffel00, Rieffel21} under the name of a {\qcms}.  Rieffel's motivation for the study of these structures is the development of a noncommutative analogue \cite{Rieffel00} of the Gromov-Hausdorff distance \cite{Gromov, Gromov81}, which in turn, enabled the formalization of various convergences found in mathematical physics, where for instance, a sphere is seen as a limit of matrix algebras. 

The definition of {\qcms s} evolved in time. We will work with the definition used by the second author, which is designed to enable the construction of an analogue of the Gromov-Hausdorff distance called the \emph{propinquity}, which presents some advantages over Rieffel's original quantum Gromov-Hausdorff distance (at the cost of more involved proofs at times). The main advantages of the propinquity,  which induces the Gromov-Hausdorff topology on the class of compact metric spaces, are two-fold. First,  it is a complete metric that  can only be zero between {\qcms s} built on  *-isomorphic C*-algebras. Second, it is designed so that it can be extended to structures beyond {\qcms s}, such as modules over {\qcms s}, monoid actions over {\qcms s}, and, putting these ideas together, to metric spectral triples \cite{Latremoliere13b, Latremoliere13c, Latremoliere15b,Latremoliere16b,  Latremoliere13, Latremoliere15, Latremoliere14, Latremoliere16c, Latremoliere18c,  Latremoliere18a,Latremoliere18b,Latremoliere21a,Latremoliere18d,Latremoliere18g,Latremoliere22}.

\medskip

The \emph{spectral propinquity} is a distance on the class of metric spectral triples, up to unitary equivalence between spectral triples \cite{Latremoliere21a, Latremoliere18g, Latremoliere22, Rieffel21}. It enables the discussion of approximations of spectral triples by others, in a geometric sense. Such natural properties, as the spectrum and the functional calculus of the Dirac operators in spectral triples, are continuous for the spectral propinquity; see \cite{Latremoliere22} for details on this. Convergence of spectral triples, as well as convergence of spectral triples truncations, has become of subject of interest as of late; see e.g. \cite{Glazier2020,  CvS, Leimbach24, Rieffel21}.

\medskip

In this paper, we apply the theory of convergence of spectral triples to an interesting family of noncommutative spaces, called the \emph{noncommutative solenoids}. Noncommutative $d$-solenoids were introduced and studied  by Latr\'emoli\`ere and  Packer in \cite{Latremoliere16,Latremoliere11c}. Let us fix, here and throughout this paper, a natural number $p \in \N\setminus\{0,1\}$; we denote the group of $p$-adic rationals by:
\begin{equation*}
	\Z\left[\frac{1}{p}\right] \coloneqq \bigcup_{n\in\N} p^{-n}\Z = \left\{ \frac{m}{p^n} : m\in\Z, n \in \N \right\} \text.
\end{equation*}
There are two natural topologies on the group $\Z\left[\frac{1}{p}\right]$: the one inhertired as a subset of $\R$, and the discrete topology --- we will fix the latter and return to the role of the former later on. The dual group of $\Z\left[\frac{1}{p}\right]$ is the solenoid group:
\begin{equation*}
	\solenoid_p \coloneqq \left\{ (z_n)_{n\in\N} \in \T^\N : \forall n \in \N \quad z_{n+1}^p = z_n \right\} \text.
\end{equation*}
A \emph{noncommutative $d$-solenoid} ($d\in \N\setminus\{0,1\}$) is the twisted group C*-algebra
\begin{equation*}
	\alg{S}_\sigma \coloneqq C^\ast\left(\Z\left[\frac{1}{p}\right]^d, \sigma\right)
\end{equation*}
for any $\T$-valued $2$-cocycle of $\Z\left[\frac{1}{p}\right]^d$. We refer to \cite{Latremoliere11c} for a discussion of these cocycles, and the classification of these algebras in terms of their cocycles in the case $d=2$. In general, we will assume that our $2$-cocycles are normalized, i.e. $\sigma(g,-g) = 1$ for all $g \in \Z\left[\frac{1}{p}\right]^d$.

Since $\Z\left[\frac{1}{p}\right]^p = \bigcup_{n\in\N} p^{-n}\Z^d$, and since the restriction of a $2$-cocycle to a subgroup is again a $2$-cocycle, the C*-algebra $\alg{S}_\sigma$ is an indutive limit of quantum tori, i.e. twisted convolution C*-algebra of $\Z^d$. Noncommutative tori are prototypes of quantum spaces, and are endowed with a natural spectral triple: in fact, noncommutative tori naturally act on $L^2(\T^d)$, which then makes it possible to define spectral triples over them by using a standard version of the Dirac operator over tori. As  noncommutative solenoids are inductive limits of noncommutative tori, it thus becomes very natural to ask what, if any, is the limit of the spectral triples over noncommutative tori, as a possible spectral triple over noncommutative solenoids. This problem is the focus of the present paper.

\medskip

Let us contextualize this problem. Since early in the study of the propinquity, the question of the relationship betweem convergence in the categorical sense (inductive limits) and in the metric sense (for the propinquity) has received attention. For instance, the second author and K. Aguilar started the study of convergence for AF algebras \cite{Latremoliere15}, and K. Aguilar continued this research with various interesting observations. In \cite{FarsiLatremolierePacker22}, a comprehensive study of the relationship between inductive limits of spectral triples \cite{Floricel19} and the propinquity was applied, in particular, to the construction of metric spectral triples on twisted convolution C*-algebras of groups obtained as inductive limits of subgroups. Examples include certain Bunce-Deddens algebras and noncommutative solenoids.

Now, the spectral triples on noncommutative solenoids in \cite{FarsiLatremolierePacker22} were not the first ones introduced --- not even the first quantum metric over these algebras. Indeed, in \cite{Latremoliere16}, the Latr\'emoli\`ere and Packer introduced the first quantum metric on noncommutative solenoids which made them limits, in the propinquity, of quantum tori. However in that case, the convergence was purely metric (no spectral triple and no spectral propinquity, which had yet to be discovered). Later on, and closer to our current work, the first and third author, together with Landry and Larsen \cite{FarsiPacker22}, used the work of Connes \cite{Connes89}, Rieffel and Christ \cite{Rieffel15b} and Long and Wu \cite{Long-Wu1, LongWu21} to construct a metric spectral tripe on noncommutative tori using a well chosen length function over the groups $\Z\left[\frac{1}{p}\right]^d$. At this point, it became natural to ask whether this spectral triple was a limit of spectral triples on quantum tori. To address this question, as seen in \cite{FarsiLatremolierePacker22}, a refinement of the spectral triple in \cite{FarsiPacker22} was used. The process described in \cite{FarsiLatremolierePacker22} is quite general.

However, the spectral triples on quantum tori constructed in \cite{FarsiLatremolierePacker22} which converge to the metric spectral triple over noncommutative solenoids also built in \cite{FarsiLatremolierePacker22}, are not quite the usual ones over quantum tori. There are two changes made from the standard spectral triples over quantum tori: they are bounded perturbations (first change) of the absolute value (second change) of the usual spectral triples over quantum tori.

Therefore, as stated above, in this paper, we wish to see if we can refine the contruction in \cite{FarsiLatremolierePacker22} to obtain, for the special case of noncommutative solenoids, some result about the convergence of the standard spectral triples. At this point, we make two observations which will drive this work. First, the need for some form of bounded perturbation in the same vein as \cite{FarsiLatremolierePacker22} will again appear, as it seems necessary (the explanation being the same as the need for the same perturbation in \cite{FarsiLatremolierePacker22}). This lead us to our main results, Theorems \eqref{thm:triple is metric} and  \eqref{thm:sp  GH lim}. The proofs here reveal some nice properties of these spectral triples which take advantage of the particular nature of noncommutative solenoids (which are built using discrete subgroups of $\R^d$, allowing for this particular construction).

Second, one key reason why we are working with the standard spectral triple over quantum tori, rather than their absolute value, is that these spectral triples have a nontrivial K-homology class --- which, we note in passing, would not be changed by a bounded perturbation. So, in essence, we are interested in the behavior of K-homology classes, as well. This seems important enough --- and natural enough --- a question to address here.

We note in passing that various classes of spectral triples over quantum tori have already been studied from a metric perspective (see \cite{Latremoliere05, Latremoliere15c, Latremoliere21a}). However, here, we explicitely ask for the convergence of spectral triples to a limit over a noncommutative solenoids rather than a quantum torus, so this work is quite distinct.

We also illustrate the discontinuity of K-homology with respect to the spectral propinquity. This is expected, of course, since for instance, all compact metric spaces can be seen as metric limits of finite metric spaces, the latter being homologically trivial --- but not geometrically so. Here, we observe that moreover, K-homology does not behave well with respect to inductive limits either (for example, for the Bunde-Deddens algebra, its $K^0$--homology group is zero \cite{Klimek23}). So, while our spectral triple over noncommutative solenoids is the natural limit of non-trivial (K-homologically speaking) spetral triples over quantum tori, we prove that in fact, there is \emph{no}  metric spectral triple on noncommutative $2$-solenoids with nonzero index map  (see Proposition \eqref{prp:no-K-hom-class}). This is an interesting phenomenon: from our geometric perspective, we isolate a very interesting spectral triple over noncommutative solenoids which gives it a  natural geometry as a limit of quantum tori, with the added bonus that the restriction of our spectral triple to the canonical quantum tori inside the noncommutative solenoids are the ``natural'' spectral triples there (up to a bounded perturbation), and our construction is natural and well-behaved --- for instance, the bounded functional calculi for the spectral triples over quantum tori converge to the bounded functional calculus for our spectral triple here, which has consequences about the continuity of the spectral actions (see \cite{Latremoliere22}) --- if one were to do physics over a noncommutative solenoid, this would be the natural path. Yet K-homology is blind to this construction.


Our paper is naturally structured in two sections: we first introduce our new metric spectral triples and prove that they are limits of spectral triples on quantum tori. We then show an obstruction result which precludes metric spectral triples on noncommutative $2$-solenoids to have nontrivial K-homology class.

In this paper we will freely use the framework and notation of \cite{Latremoliere21a, Latremoliere18g,Latremoliere22, Rieffel21}
(and especially \cite{FarsiLatremolierePacker22})
to which we refer the reader for details on the propinquity and the spectral propinquity.

\begin{acknowledgement*} The authors thank A. Gorokhovsky for helpful conversations. 
This work was partially supported by the Simons Foundation (Simons Foundation collaboration grant \#523991 [C. Farsi] and \# 31698 [J. Packer].)
\end{acknowledgement*}

\section{A Metric Spectral Triple on Noncommutative Solenoids}

We fix $d, p \in \N\setminus\{0,1\}$. We will work with the following groups:
\begin{equation*}
	G_\infty \coloneqq \Z\left[\frac{1}{p}\right]^d = \bigcup_{n\in\N} G_n \text{ where }G_n \coloneqq \left(p^{-n} \Z\right)^d \text{ for all $n\in\N$.}
\end{equation*}
For each $j \in \{1,\ldots,d\}$, we denote the $j$-th canonical surjection from $G_\infty$ by  $x_j : (z_1,\ldots,z_d) \in G_\infty \mapsto z_j \text.$

We now define a function $F$ on $G_\infty$, akin to a $p$-adic absolute value, to keep track of which subgroup of the form $G_n$ an element naturally belongs to; formally:
\begin{equation*}
	\mathds{F} : (z_1,\ldots,z_d) \in G_\infty \mapsto p^{\min\{ n \in \N : (p^n z_1,\ldots,p^n z_d) \in \Z^d \} } \text.
\end{equation*}

These functions allow the definition of a natural length function over the group $G_\infty$, by setting
\begin{equation}\label{length-eq}
	\mathds{L} : z \in G_\infty \longmapsto \sqrt{\sum_{j=1}^d |x_j(z)|^2 + \mathds{F}(z)^2} \text.
\end{equation}
By \cite[Corollary 4.17]{FarsiLatremolierePacker22}, the function $\mathds{L}$ is a proper length function with the bounded doubling property, in the following sense:
\begin{definition}
	A function $l : G\rightarrow[0,\infty)$ is a length function over a group $G$ when:
	\begin{enumerate}
		\item $l(g) = 0$ if and only if $g = 1$,
		\item $l(g) = l(g^{-1})$ for all $g \in G$,
		\item $l(g h) \leq l(g) + l(h)$ for all $g,h \in G$.
		
	\end{enumerate}
	If $G_l[r] \coloneqq \{ g \in G : l(g) \leq r \}$ is finite for all $r > 0$, then we say that $l$ is \emph{proper}.
\end{definition}

\begin{definition}
	A proper length function $l$ on a discrete group $G$ has \emph{bounded-doubling} when there exists $C > 0$ such that, for all $r \geq 1$,
	\begin{equation*}
	|G_{l}[2r]| \leq C\ |G_{l}[r]| \text,
	\end{equation*}
	where we denoted by $|\cdot|$ the cardinality of the given sets.
\end{definition}

\medskip

We define the following subspace:
\begin{equation*}
	H \coloneqq \left\{ f \in L^2\left(G_\infty\right) :
	\sum_{z \in G_\infty} \mathds{L}(z)^2 |f(z)|^2 < \infty \right\} \text.
\end{equation*}
On $H$, for each $j\in\{1,\ldots,d\}$, we define the multiplication operator $X_j$ by
\begin{equation*}
	X_j : f \in H \mapsto x_j f \text.
\end{equation*}
as well as the multiplication operator:
\begin{equation*}
	M_{\mathds{F}} : f \in H \mapsto \mathds{F} f \text.
\end{equation*}

We assemble our Dirac operator from the above ingredients, by introducing $d+1$ anticommuting self-adjoint unitaries $\gamma_1$,\ldots,$\gamma_{d+1}$ on some Hermitian space $E$, i.e. the images of the canonical generators of the Clifford algebra of $\C^{d+1}$ by some chosen representation on $E$. We then define on $\dom{\Dirac} \coloneqq H\otimes E$:
\begin{equation}\label{Our-Dirac-eq}
	\Dirac_\infty \coloneqq \sum_{j=1}^d X_j \otimes \gamma_j + M_{\mathds{F}}\otimes\gamma_{d+1} \text.
\end{equation}
Henceforth, we identify $\ell^2(G_\infty)\otimes E$ with $\ell^2(G_\infty,E)$.
\medskip

We now fix a $\T$-valued $2$-cocycle $\sigma$ of $G_\infty$. Of course, $G_\infty$ acts on $\ell^2\left(G_\infty\right)$ via its left regular $\sigma$-projective representation. Specifically, for all $g \in G_\infty$, and for all $\xi \in \ell^2\left(G_\infty\right)$, we set
\begin{equation*}
	\lambda_\infty(g) \xi : h \in G_\infty \longmapsto \sigma_g^h\xi(h-g) \text{ where }\sigma_g^h \coloneqq \sigma(g,h-g) \text.
\end{equation*}
Thus defined, $\lambda_\infty$ is a unitary $\sigma$-projective representation of $G_\infty$. More generally, for any $f \in C_c\left(G_\infty\right)$, we define $\lambda_\infty(f) \coloneqq \sum_{g \in G_\infty} f(g)\lambda_\infty(g)$. The C*-algebra $C^\ast\left(G_\infty,\sigma\right)$ is identified with the closure of $\lambda_\infty\left(C_c\left(G_\infty\right)\right)$ for the operator norm (since $G_\infty$ is Abelian, hence amenable).

We propose to study in this section the spectral triple
\begin{equation}\label{Our-spectral-triple-eq}
	\left( C^\ast\left(G_\infty,\sigma \right),\ell^2\left(G_\infty\right)\otimes E, \Dirac_\infty \right) \text.
\end{equation}

Of keen interest to us are the restrictions of our spectral triple to quantum tori. Specifically, we identify $\ell^2(G_n)\otimes E$ with the subspace \[\left\{ \xi \in \ell^2(G_\infty)\otimes E : \forall g \in G_\infty\setminus G_n \quad \xi(g) = 0 \right\}\text.\] It is a standard exercise to check that $\lambda_\infty(f) \ell^2(G_n)\otimes E \subseteq \ell^2(G_n)\otimes E$ whenever $f \in C_c(G_n)$, identified again in the obvious way as a subspace (in fact, a subalgebra) of $C_c(G_\infty)$. 

An easy exercise shows that $\Dirac_\infty \dom{\Dirac_\infty}\cap \ell^2(G_n)\otimes E\subseteq \ell^2(G_n)\otimes E$. We define the restriction of $\Dirac_\infty$ to $\dom{\Dirac_n} \coloneqq \dom{\Dirac}\cap \ell^2(G_n)\otimes E$ by $\Dirac_n$. We thus obtain spectral triples on quantum tori of the form:
\begin{equation}\label{Our-spectral-triple-n-eq}
	\left(C^\ast(G_n,\sigma), \ell^2(G_n)\otimes E, \Dirac_n\right)\text,
\end{equation}
where we denote the restriction of $\sigma$ to $G_n$ again by $\sigma$.

The representation $\lambda_\infty$ restricted to $C^\ast(G_n,\sigma)$ leaves the subspace $\ell^2(G_n,E)$ invariant, and we call the induced representation $\lambda_n$; with our identifications, $\lambda_n$ is of course the left regular $\sigma$-projective representation of the quantum torus $C^\ast(G_n,\sigma)$. \emph{We now observe that the spectral triple given in Equation \eqref{Our-spectral-triple-n-eq} is a bounded perturbation of the usual spectral triple over the quantum torus $C^\ast(G_n,E)$ up to a scaling.}

\medskip

A first step in our work is of course to check that we indeed have defined spectral triples.
\begin{lemma}
	$(C^\ast(G_n,\sigma),\ell^2(G_n)\otimes E,\Dirac_n)$ is a spectral triple for all $n\in \N\cup\{\infty\}$.
\end{lemma}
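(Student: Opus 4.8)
The plan is to recognize $\Dirac_\infty$ (and each restriction $\Dirac_n$) as a fiberwise multiplication operator on $\ell^2(G_\infty,E)$ and to read all three defining properties of a spectral triple off that single description. Under the identification $\ell^2(G_\infty)\otimes E\cong\ell^2(G_\infty,E)$, Equation \eqref{Our-Dirac-eq} acts by $(\Dirac_\infty\xi)(z)=D(z)\,\xi(z)$, where $D(z)\coloneqq\sum_{j=1}^d x_j(z)\gamma_j+\mathds{F}(z)\gamma_{d+1}$. Since the $\gamma$'s are anticommuting self-adjoint unitaries, each $D(z)$ is a self-adjoint matrix on $E$ with
\[
D(z)^2=\left(\sum_{j=1}^d x_j(z)^2+\mathds{F}(z)^2\right)\mathrm{id}_E=\mathds{L}(z)^2\,\mathrm{id}_E
\]
by Equation \eqref{length-eq}. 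Hence $D(z)$ has eigenvalues $\pm\mathds{L}(z)$ and $\|D(z)v\|=\mathds{L}(z)\|v\|$ for all $v\in E$, so the maximal domain of this multiplication operator is precisely $\{\xi:\sum_z\mathds{L}(z)^2\|\xi(z)\|^2<\infty\}=H\otimes E=\dom{\Dirac}$.

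From this picture, self-adjointness is immediate: a multiplication operator by a pointwise self-adjoint family of matrices is self-adjoint on its maximal domain, which we have just identified with $\dom{\Dirac}$; restricting to the invariant subspace $\ell^2(G_n,E)$ gives self-adjointness of $\Dirac_n$ on $\dom{\Dirac_n}$. For the compact resolvent, note that $(\Dirac_\infty\pm i)^{-1}$ acts fiberwise by $(D(z)\pm i)^{-1}$, of norm $(1+\mathds{L}(z)^2)^{-1/2}$. Because $\mathds{L}$ is a \emph{proper} length function, every sublevel set $\{z:\mathds{L}(z)\le r\}$ is finite; together with $\dim E<\infty$ this shows each eigenvalue $\pm\mathds{L}(z)$ has finite multiplicity and the eigenvalues tend to infinity, so the resolvent is a norm-limit of finite-rank operators, hence compact (and likewise for $\Dirac_n$).

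The substance of the lemma lies in the commutator condition. I would take $\A_{\Dirac}=\lambda_n(C_c(G_n))$, a dense $\ast$-subalgebra of $C^\ast(G_n,\sigma)$ by construction, and bound $[\Dirac,\lambda(g)]$ on generators. A direct fiberwise computation yields
\[
\left([\Dirac_\infty,\lambda_\infty(g)]\xi\right)(h)=\sigma_g^h\,\bigl(D(h)-D(h-g)\bigr)\,\xi(h-g).
\]
Since each $x_j$ is a group homomorphism, $x_j(h)-x_j(h-g)=x_j(g)$ is constant, so the $X_j$-terms contribute the fixed matrix $\sum_j x_j(g)\gamma_j$; the only delicate term is $(\mathds{F}(h)-\mathds{F}(h-g))\gamma_{d+1}$. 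Domain preservation, needed for the commutator to make sense, follows from subadditivity of $\mathds{L}$, which gives $\mathds{L}(h)\le\mathds{L}(h-g)+\mathds{L}(g)$ and hence $\lambda_\infty(g)(H\otimes E)\subseteq H\otimes E$.

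The hard part, and the crux of the proof, is showing $|\mathds{F}(h)-\mathds{F}(h-g)|$ stays bounded uniformly in $h$ when $n=\infty$; for finite $n$ this is immediate since $\mathds{F}\le p^n$ on $G_n$. Writing $\mathds{F}(z)=p^{v(z)}$ with $v(z)=\min\{k\in\N:p^k z\in\Z^d\}$, the valuation $v$ obeys the ultrametric inequality $v(a+b)\le\max(v(a),v(b))$. I would argue that if $v(h)>v(g)$ then $v(h-g)=v(h)$, so the difference vanishes, whereas if $v(h)\le v(g)$ then both $\mathds{F}(h)$ and $\mathds{F}(h-g)$ are at most $\mathds{F}(g)$; in either case $|\mathds{F}(h)-\mathds{F}(h-g)|\le\mathds{F}(g)$, a constant depending only on $g$. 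Thus each $[\Dirac,\lambda(g)]$ extends to a bounded operator, and linearity extends this to all of $C_c(G_n)$. Assembling self-adjointness, compactness of the resolvent, and bounded commutators completes the proof; I expect the ultrametric estimate on $\mathds{F}$ to be the only nonroutine step, the rest being a transcription of the fiberwise picture.
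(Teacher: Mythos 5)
Your proof is correct and takes essentially the same route as the paper's: a direct computation of $[\Dirac_n,\lambda_n(g)]$ reducing boundedness of the commutator to the estimate $|\mathds{F}(h)-\mathds{F}(h-g)|\leq \mathds{F}(g)$, self-adjointness via the (fiberwise) multiplication-operator picture, and compact resolvent from $|\Dirac_n|$ being multiplication by the proper length $\mathds{L}$. The only difference is that you supply the details the paper dispatches as a ``quick computation'' and a ``standard argument'' --- in particular your ultrametric case analysis on the valuation $v(z)=\min\{k\in\N : p^k z\in\Z^d\}$ is a correct proof of the key $\mathds{F}$-estimate.
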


\begin{proof}
	Fix $n\in \N\cup\{\infty\}$. Let $g \in G_n$, and note that $\lambda_n(g) \dom{\Dirac_n}\subseteq\dom{\Dirac_n}$. Let $\xi \in \ell^2(G_n,E)$. For all $h\in G_n$, we compute:
	\begin{align*}
	[\Dirac_n,\lambda_n(g)] \xi(h)
	&= \Dirac_n\lambda_n(g)\xi(h) - \lambda_n(g)\Dirac_n\xi(h) \\
	&= \left(\sum_{j=1}^d x_j(h)\gamma_j \sigma_{g}^h \xi(h-g)  + \mathds{F}(h) \sigma_{g}^h \gamma_{d+1}\xi(h-g)\right) \\
	&- \left( \sum_{j=1}^d x_j(h-g) \sigma_{g}^h \gamma_j\xi(h-g) + \mathds{F}(h-g)\sigma_{g}^h \gamma_{d+1}\xi(h-g)  \right)  \\
	&= \left(\sum_{j=1}^d x_j(g) \gamma_j + (\mathds{F}(h)-\mathds{F}(h-g)) \gamma_{d+1}\right)\sigma_{g}^h  \xi(h-g) .
	\end{align*}
	
	A quick computation shows that $|\mathds{F}(h) -\mathds{F}(h-g)| \leq \mathds{F}(g)$. Therefore, $[\Dirac_n,\lambda_n(g)]$ is bounded. Consequently, if $f \in C_c(G_\infty)$ then $\lambda_n(f)\dom{\Dirac_n}\subseteq\dom{\Dirac_n}$, and $[\Dirac_n,\lambda_n(f)]$ is bounded.
	
	It is a standard argument that $\Dirac_n$ is self-adjoint. Last, $\Dirac_n$ has compact resolvent because $|\Dirac_n|$ is the operator of multiplication by the length $\mathds{L}$, and $\mathds{L}$ is proper.
\end{proof}

We now want to prove our spectral triples are, in fact, metric. By \cite{Rieffel15b,LongWu21}, spectral triples defined over discrete group convolution C*-algebras using proper length function, as initially proposed by Connes \cite{Connes89}, provide examples of metric spectral triples when the length function has the bounded doubling property. Our spectral triples here are not of the same type, but we shall relate them.

We begin with the following lemma.

\begin{lemma}\label{doubling-lemma}
	If $l$ is a proper length function  on a discrete group $G$ with bounded doubling, then $\sqrt{l}$  is also a  proper length function with bounded doubling on $G$.
\end{lemma}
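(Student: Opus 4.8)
The plan is to verify in turn each property required of $\sqrt{l}$, observing that all three reduce to elementary facts about the square root on $[0,\infty)$ combined with the corresponding properties of $l$.

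First I would check the three length-function axioms. Since $\sqrt{t} = 0$ if and only if $t = 0$, the vanishing axiom for $\sqrt{l}$ follows from that for $l$, and the symmetry axiom $\sqrt{l(g)} = \sqrt{l(g^{-1})}$ is immediate. The only axiom needing a computation is subadditivity, and the key observation is that the square root is itself subadditive on $[0,\infty)$, i.e. $\sqrt{a+b} \leq \sqrt{a}+\sqrt{b}$ (which one sees by squaring the right-hand side). Together with the monotonicity of $\sqrt{\cdot}$ and the triangle inequality for $l$, this gives
\[
	\sqrt{l(gh)} \leq \sqrt{l(g)+l(h)} \leq \sqrt{l(g)} + \sqrt{l(h)}
\]
for all $g,h \in G$, as required.

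The second step is to record the elementary identity relating sublevel sets: for every $r \geq 0$ one has $\sqrt{l(g)} \leq r$ if and only if $l(g) \leq r^2$, so that $G_{\sqrt{l}}[r] = G_l[r^2]$. Properness of $\sqrt{l}$ is then immediate, since $G_l[r^2]$ is finite for all $r > 0$ by properness of $l$.

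Finally, for bounded doubling I would exploit the same identity. Writing $C$ for the doubling constant of $l$ and fixing $r \geq 1$, the identity gives $G_{\sqrt{l}}[2r] = G_l[4r^2]$ and $G_{\sqrt{l}}[r] = G_l[r^2]$. Since $r^2 \geq 1$, I can invoke the doubling inequality for $l$ twice, first at radius $2r^2$ and then at radius $r^2$, to obtain
\[
	|G_{\sqrt{l}}[2r]| = |G_l[4r^2]| \leq C\,|G_l[2r^2]| \leq C^2\,|G_l[r^2]| = C^2\,|G_{\sqrt{l}}[r]| \text.
\]
Hence $\sqrt{l}$ has bounded doubling with constant $C^2$. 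There is no genuine obstacle here: the one point worth care is that squaring the radius turns the factor $2$ into $4 = 2^2$, which is exactly why the doubling hypothesis must be applied twice and the constant ends up squared.
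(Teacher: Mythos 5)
Your proposal is correct and follows essentially the same route as the paper's own proof: the three length-function axioms via subadditivity of $\sqrt{\cdot}$ on $[0,\infty)$, the sublevel-set identity $G_{\sqrt{l}}[r] = G_l[r^2]$ for properness, and a double application of the doubling inequality giving the constant $C^2$. Your explicit remark that squaring the radius turns the factor $2$ into $4$, forcing two applications of doubling, is exactly the computation the paper performs.
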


\begin{proof}
	For any $g \in G$, we first note that $\sqrt{l(g)} = 0$ if, and only if, $l(g) = 0$, i.e. if, and only if, $g$ is the unit of $G$.
	
	We then note that for all $g \in G$, we have $\sqrt{l(g^{-1})} = \sqrt{l(g)}$.
	
	Furthermore, let $g,g' \in G$. We then note that
	\begin{equation*}
	\sqrt{l(gg')}\leq \underbracket[1pt]{\sqrt{l(g)+l(g')}}_{l\text{ is a length function}} \leq \underbracket[1pt]{\sqrt{l(g)} + \sqrt{l(g')}}_{\forall x,y\geq 0 \; \sqrt{x+y}\leq\sqrt{x}+\sqrt{y}} \text.
	\end{equation*}
	
	Therefore, $\sqrt{l}$ is a length function on $G$.
	
	\medskip
	
	Now let $r \geq 1$. By definition,
	\begin{equation*}
	G_{\sqrt{l}}[r] = G_{l}[r^2] \text.
	\end{equation*}
	
	Therefore, $\sqrt{l}$ is proper. Moreover, since $l$ has the bounded doubling property, we observe that:
	\begin{equation*}
	|G_{\sqrt{l}}[2 r]| = |G_{l}[4 r^2]| \leq C\ |G_{l}[2r^2]| \leq C^2|G_{l}[r^2]| = C^2\ |G_{\sqrt{l}}[r]| \text.
	\end{equation*}
	This concludes the proof of our lemma.
\end{proof}

We then need a lemma relating the behavior of two spectral triples, one obtained by taking the absolute value of the Dirac operator of the other. In fact, such a result escapes us, but we can get close enough for our purpose, as follows.
\begin{lemma}\label{comp-lemma}
	Let $(\A,\Hilbert,\Dirac)$ be a spectral triple. If $r \in (0,1)$, then there exists $K_r \in (0,\infty)$ such that, for all $a \in \A$ such that $a\, \dom{\Dirac}\subseteq \dom{\Dirac}$ and $[\Dirac,a]$ is bounded, we have:
	\begin{equation*}
	\opnorm{[|\Dirac|^r,a]}{}{\Hilbert} \leq K_r \opnorm{[\Dirac,a]}{}{\Hilbert} \text.
	\end{equation*}
\end{lemma}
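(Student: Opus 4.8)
The plan is to realise $|\Dirac|^r$ as a fractional power of the nonnegative operator $A \coloneqq \Dirac^2$, namely $|\Dirac|^r = A^{s}$ with $s \coloneqq \frac{r}{2} \in \left(0,\frac12\right)$, and to exploit the classical integral representation
\[
A^{s} = \frac{\sin(\pi s)}{\pi}\int_0^\infty \lambda^{s-1}\, A(A+\lambda)^{-1}\dif\lambda \text.
\]
First I would record the commutator identity $[A(A+\lambda)^{-1},a] = \lambda\,(A+\lambda)^{-1}[A,a](A+\lambda)^{-1}$, which follows from $A(A+\lambda)^{-1} = 1 - \lambda(A+\lambda)^{-1}$ together with $[(A+\lambda)^{-1},a] = -(A+\lambda)^{-1}[A,a](A+\lambda)^{-1}$. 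Granting that one may commute $a$ through the integral — a point I postpone — this yields
\[
[|\Dirac|^r,a] = \frac{\sin(\pi s)}{\pi}\int_0^\infty \lambda^{s}\,(A+\lambda)^{-1}[A,a](A+\lambda)^{-1}\dif\lambda \text.
\]

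Next I would unfold $[A,a] = [\Dirac^2,a] = \Dirac[\Dirac,a] + [\Dirac,a]\Dirac$ and set $B \coloneqq [\Dirac,a]$, which is bounded by hypothesis. Using that $\Dirac$ commutes with every bounded function of $A=\Dirac^2$, and in particular with $(A+\lambda)^{-1}$, the integrand regroups as
\[
(A+\lambda)^{-1}[A,a](A+\lambda)^{-1} = \Dirac(A+\lambda)^{-1}\,B\,(A+\lambda)^{-1} + (A+\lambda)^{-1}\,B\,\Dirac(A+\lambda)^{-1} \text,
\]
so that each term carries exactly one factor of $\Dirac$ sitting next to a resolvent. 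The whole estimate then rests on two bounds read off from the spectral theorem: $\opnorm{(A+\lambda)^{-1}}{}{\Hilbert} \leq \frac1\lambda$ for all $\lambda > 0$, and $\opnorm{\Dirac(A+\lambda)^{-1}}{}{\Hilbert} = \sup_{t\in\mathrm{Sp}(\Dirac)}\frac{|t|}{t^2+\lambda} \leq \frac{1}{2\sqrt\lambda}$.

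The delicate point, and the one I expect to be the main obstacle, is the convergence of the integral at its two endpoints. With only the two bounds above, the integrand is dominated by a constant times $\opnorm{B}{}{\Hilbert}\,\lambda^{s-3/2}$; this is integrable at $+\infty$ precisely because $s < \frac12$, that is because $r < 1$ — this is exactly where the hypothesis $r\in(0,1)$ is forced, and why the borderline case $r=1$ genuinely escapes the method — but it is \emph{not} integrable at $0$. To cure the behaviour near $\lambda = 0$ I would invoke compactness of the resolvent of $\Dirac$: the nonzero spectrum of $\Dirac$ is discrete and bounded away from $0$, so there is $\delta > 0$ with $|t| \geq \delta$ for every nonzero $t \in \mathrm{Sp}(\Dirac)$, whence $\opnorm{\Dirac(A+\lambda)^{-1}}{}{\Hilbert} \leq \frac1\delta$ for all $\lambda \leq \delta^2$. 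Splitting the integral at $\lambda = \delta^2$, the factor $\Dirac(A+\lambda)^{-1}$ stays bounded near the origin, so there the integrand is dominated by a constant times $\opnorm{B}{}{\Hilbert}\,\lambda^{s-1}$, which is integrable since $s > 0$. Both tails are thus finite, and collecting the constants — which depend only on $r$, on $\delta$, and on numerical factors, but never on $a$ — produces the desired $K_r$ with $\opnorm{[|\Dirac|^r,a]}{}{\Hilbert} \leq K_r\,\opnorm{[\Dirac,a]}{}{\Hilbert}$.

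Finally I would return to the analytic justifications set aside above: that $a$ preserves $\dom{\Dirac^2}$ and that $[\Dirac^2,a] = \Dirac B + B\Dirac$ holds on a core, both consequences of $a\,\dom{\Dirac}\subseteq\dom{\Dirac}$ together with boundedness of $B$; and that the norm convergence of the integrals established in the previous step legitimizes both the integral representation of $[|\Dirac|^r,a]$ and the passage of the commutator inside the integral. I expect these domain and interchange matters, rather than the inequalities themselves, to be the fussiest part to write down carefully, whereas the genuinely substantive ingredient is the endpoint analysis at $\lambda\to 0$ powered by the spectral gap coming from the compact resolvent.
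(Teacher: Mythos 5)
Your strategy is the standard one, and it is in substance the argument behind the proof the paper invokes: the paper offers no inline argument at all, deferring verbatim to \cite[Lemma 6.23]{Brain15}, whose proof runs through the same integral representation of fractional powers and resolvent expansion you propose. Your endpoint analysis is correct and well-aimed: the bound $\opnorm{(A+\lambda)^{-1}}{}{\Hilbert}\leq \lambda^{-1}$ and $\opnorm{\Dirac(A+\lambda)^{-1}}{}{\Hilbert}\leq \frac{1}{2\sqrt{\lambda}}$ give integrability at $+\infty$ exactly when $r<1$, and the spectral gap $\delta>0$ around $0$ in $\mathrm{Sp}(\Dirac)$ (available because $\Dirac$ has compact resolvent, so its spectrum is discrete with only finitely many points in any bounded interval) cures the origin. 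You are also right that this gap is what buys the clean estimate $K_r\opnorm{[\Dirac,a]}{}{\Hilbert}$ with no $\norm{a}{}$ term: the common variant that substitutes $(1+\Dirac^2)^{r/2}$ for $|\Dirac|^r$ to tame $\lambda\to 0$ leaves a bounded-difference correction whose commutator with $a$ is only controlled by $\norm{a}{}$, which would not suffice here. (Minor remark: $\opnorm{\Dirac(A+\lambda)^{-1}}{}{\Hilbert}\leq \delta^{-1}$ in fact holds for all $\lambda>0$, so the splitting at $\lambda=\delta^2$ is convenient but not needed.)

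There is, however, one genuine error in your deferred justifications: the claim that $a\,\dom{\Dirac}\subseteq\dom{\Dirac}$ together with boundedness of $B=[\Dirac,a]$ implies $a\,\dom{\Dirac^2}\subseteq\dom{\Dirac^2}$. For $\xi\in\dom{\Dirac^2}$ one has $\Dirac a\xi=a\Dirac\xi+B\xi$ with $a\Dirac\xi\in\dom{\Dirac}$, so $a\xi\in\dom{\Dirac^2}$ if and only if $B\xi\in\dom{\Dirac}$ --- and a merely bounded $B$ has no reason to map $\dom{\Dirac^2}$ into $\dom{\Dirac}$. A concrete counterexample: on $\ell^2(\N)$ with $\Dirac e_n=n e_n$, set $a e_n=(f(n)-n)^{-1}e_{f(n)}$ with $f(n)=\lceil e^n\rceil$; then $a$ is bounded, maps $\Hilbert$ into $\dom{\Dirac}$, and $B e_n=e_{f(n)}$ is bounded, yet $B$ sends suitable vectors of $\dom{\Dirac^2}$ outside $\dom{\Dirac}$. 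This matters because your derivation of $[(A+\lambda)^{-1},a]=-(A+\lambda)^{-1}[A,a](A+\lambda)^{-1}$ requires $[A,a]$ to make sense on $\range{(A+\lambda)^{-1}}=\dom{A}$, i.e.\ precisely the false domain-preservation claim.

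The repair is standard and keeps all your estimates intact: establish the sandwiched identity directly, as an equality of bounded operators,
\begin{equation*}
[(A+\lambda)^{-1},a]=-\Dirac(A+\lambda)^{-1}\,B\,(A+\lambda)^{-1}-(A+\lambda)^{-1}\,B\,\Dirac(A+\lambda)^{-1}\text,
\end{equation*}
by computing the quadratic form $\langle\eta,[(A+\lambda)^{-1},a]\xi\rangle$ with $\xi'=(A+\lambda)^{-1}\xi$ and $\eta'=(A+\lambda)^{-1}\eta$ in $\dom{A}\subseteq\dom{\Dirac}$, moving one factor of $\Dirac$ into the left slot of the inner product; here one uses that $a^\ast$ also preserves $\dom{\Dirac}$ with $[\Dirac,a^\ast]=-B^\ast$ on $\dom{\Dirac}$, which, unlike preservation of $\dom{\Dirac^2}$, \emph{does} follow from the hypotheses by a short duality argument. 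Your norm bounds then apply verbatim to each sandwiched integrand, the integral converges in norm to a bounded operator, and its agreement with $|\Dirac|^r a-a|\Dirac|^r$ need only be verified as forms on $\dom{\Dirac}$, which is a core for $|\Dirac|^r$ since $r<1$; at no point does $a$ need to preserve $\dom{\Dirac^2}$, nor $\dom{|\Dirac|^r}$ a priori.
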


\begin{proof}
	The proof is identical to the proof of \cite[Lemma 6.23]{Brain15}
\end{proof}

We now can relate the spectral triples of the type defined here, and the spectral triples defined using length functions, and obtain a sufficient condition to prove certain spectral triples are indeed metric.

\begin{proposition}\label{metric-sp-cor}
	Let $\mathds{L}$ be a  proper length function with bounded doubling on a nilpotent discrete group $G$, let $\sigma$ be a $2$-cocycle of $G$ with values in $\T$, and let $E$ be a Hermitian space. Let $\lambda$ be the left regular $\sigma$-projective representation of $C^\ast(G,\sigma)$ on the Hilbert space $\ell^2(G)$. If $(C^\ast(G,\sigma),\ell^2(G)\otimes E,\Dirac)$ is a spectral triple such that $|\Dirac|$ is $M_{\mathds{L}}\otimes 1_E$, where $M_{\mathds{L}}$ is the multiplication operator by $\mathds{L}$, then $(C^\ast(G,\sigma),\ell^2(G)\otimes E, \Dirac)$ is a metric spectral triple.
\end{proposition}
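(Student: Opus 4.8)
The plan is to show that the Lip-seminorm $\mathsf{L}_{\Dirac} : a \mapsto \opnorm{[\Dirac,\lambda(a)]}{}{}$, defined on those self-adjoint $a$ in a dense subalgebra of $C^\ast(G,\sigma)$ with $a\,\dom{\Dirac}\subseteq\dom{\Dirac}$ and $[\Dirac,a]$ bounded, is a Lip-norm inducing the weak-$\ast$ topology on $\StateSpace(C^\ast(G,\sigma))$. The governing idea is to \emph{dominate a Lip-seminorm already known to be metric}: by Rieffel's criterion, if $\mathsf{L}_0 \leq C\,\mathsf{L}_{\Dirac}$ for some constant $C$ and some $\mathsf{L}_0$ whose Lipschitz ball is totally bounded modulo scalars, then the smaller Lipschitz ball of $\mathsf{L}_{\Dirac}$ sits inside a scalar multiple of that of $\mathsf{L}_0$, hence is itself totally bounded, so that $\mathsf{L}_{\Dirac}$ is a Lip-norm.

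The candidate for $\mathsf{L}_0$ is produced by the $r=\frac12$ case of Lemma \ref{comp-lemma}. Applying it to our spectral triple yields a constant $K_{1/2}$ with $\opnorm{[|\Dirac|^{1/2},\lambda(a)]}{}{} \leq K_{1/2}\,\opnorm{[\Dirac,\lambda(a)]}{}{}$ for every admissible $a$. Since by hypothesis $|\Dirac| = M_{\mathds{L}}\otimes 1_E$, functional calculus gives $|\Dirac|^{1/2} = M_{\sqrt{\mathds{L}}}\otimes 1_E$; and because $C^\ast(G,\sigma)$ acts as $\lambda\otimes 1_E$, we have $[\,M_{\sqrt{\mathds{L}}}\otimes 1_E,\ \lambda(a)\otimes 1_E\,] = [M_{\sqrt{\mathds{L}}},\lambda(a)]\otimes 1_E$, so the operator norm is unchanged by the tensor factor. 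Thus, setting $\mathsf{L}_0(a) := \opnorm{[M_{\sqrt{\mathds{L}}},\lambda(a)]}{}{}$, we obtain exactly $\mathsf{L}_0 \leq K_{1/2}\,\mathsf{L}_{\Dirac}$.

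It remains to know that $\mathsf{L}_0$ is a Lip-norm. Here $\mathsf{L}_0$ is precisely the Lip-seminorm of the length-function spectral triple $(C^\ast(G,\sigma),\ell^2(G),M_{\sqrt{\mathds{L}}})$ in the sense of Connes. By Lemma \ref{doubling-lemma}, $\sqrt{\mathds{L}}$ is again a proper length function with bounded doubling, and $G$ is nilpotent; hence the Christ--Rieffel and Long--Wu results \cite{Rieffel15b,LongWu21} apply and guarantee that this length-function spectral triple is metric, i.e. $\mathsf{L}_0$ is a Lip-norm whose unit ball is totally bounded modulo scalars. Feeding this into the comparison closes the argument: the kernel identity $\{\mathsf{L}_{\Dirac}=0\}=\C 1$ follows since $\mathsf{L}_0(a)=0$ forces $a\in\C 1$ and $\mathsf{L}_0\leq K_{1/2}\mathsf{L}_{\Dirac}$; lower semicontinuity of $\mathsf{L}_{\Dirac}$ is standard for commutator seminorms; and total boundedness of the Lipschitz ball transfers from $\mathsf{L}_0$ to $\mathsf{L}_{\Dirac}$.

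The main obstacle is not any single estimate but the fact, flagged just before Lemma \ref{comp-lemma}, that no clean relation holds between a spectral triple and the one obtained by replacing $\Dirac$ with $|\Dirac|$. The device that rescues the proof is to pass instead to $|\Dirac|^{1/2}$, whose commutators are controlled by those of $\Dirac$ through Lemma \ref{comp-lemma}, and to absorb the square root into the length function via Lemma \ref{doubling-lemma} so that the known metric results still apply verbatim. The one point demanding care is to confirm that the cited theorems are stated for the multiplication-by-length operator in exactly the form $M_{\sqrt{\mathds{L}}}$ and that nilpotency, which is assumed in the proposition, is the hypothesis they require.
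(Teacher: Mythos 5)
Your proposal is correct and follows essentially the same route as the paper's own proof: apply Lemma \ref{comp-lemma} with $r=\tfrac12$, identify $|\Dirac|^{1/2}$ with $M_{\sqrt{\mathds{L}}}\otimes 1_E$, invoke Lemma \ref{doubling-lemma} together with the Christ--Rieffel and Long--Wu results to see that the length-function triple is metric, and transfer the Lip-norm property via Rieffel's Comparison Lemma. The only difference is cosmetic: you unpack the Comparison Lemma (total boundedness of the Lipschitz ball and the kernel condition) explicitly, where the paper simply cites \cite[Lemma 1.10]{Rieffel98a}.
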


\begin{proof}
	Let $\Hilbert\coloneqq\ell^2(G)\otimes E$. 
	By Lemma (\ref{comp-lemma}), there exists $K > 0$ such that, for all $a\in\dom{\Lip_\Dirac}$ with $\opnorm{[\Dirac,a]}{}{\Hilbert}$ bounded, we have:
	\begin{equation}\label{metric-lemma-eq1}
	\opnorm{[|\Dirac|^{\frac{1}{2}},a]}{}{\Hilbert} \leq\  K\ \opnorm{[\Dirac,a]}{}{\Hilbert} \text.
	\end{equation}
	
	Now,  a simple computation shows that
	\begin{equation*}
	|\Dirac|^{\frac{1}{2}} = M_{\sqrt{\mathds{L}}}\otimes 1_{E}\text.
	\end{equation*}
	Since $\sqrt{\mathds{L}}$ is a proper bounded doubling length function by Lemma (\ref{doubling-lemma}), by applying \cite{Rieffel15b,LongWu21} we get that  the spectral triple $(C^\ast(G,\sigma),\ell^2(G),M_{\sqrt{\mathds{L}}})$ is metric. It is then immediate that $(C^\ast(G,\sigma),\ell^2(G)\otimes E,|\Dirac|^{\frac{1}{2}})$ is metric as well.
	
	By Expression (\ref{metric-lemma-eq1}) and the Comparison Lemma \cite[Lemma 1.10]{Rieffel98a}, we conclude that $(C^\ast(G,\sigma),\ell^2(G)\otimes E,\Dirac)$ is metric.
\end{proof}

\begin{theorem}   \label{thm:triple is metric}
The spectral triple in Equation \eqref{Our-spectral-triple-eq} is metric.
\end{theorem}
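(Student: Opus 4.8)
The plan is to reduce the statement to Proposition~\ref{metric-sp-cor}. That proposition concludes that a spectral triple over $C^\ast(G,\sigma)$ acting on $\ell^2(G)\otimes E$ is metric, provided $G$ is nilpotent, $\mathds{L}$ is a proper bounded-doubling length function, and the Dirac operator has absolute value $M_{\mathds{L}}\otimes 1_E$. Here $G = G_\infty = \Z\left[\frac{1}{p}\right]^d$ is abelian, hence nilpotent; the function $\mathds{L}$ defined in~\eqref{length-eq} is a proper length function with bounded doubling by \cite[Corollary 4.17]{FarsiLatremolierePacker22}; and the triple~\eqref{Our-spectral-triple-eq} is indeed a spectral triple by the first Lemma of this section, taken with $n=\infty$. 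Thus the only point left to verify is the identity $|\Dirac_\infty| = M_{\mathds{L}}\otimes 1_E$.

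To establish it, I would compute $\Dirac_\infty^2$. The multiplication operators $X_1,\ldots,X_d$ and $M_{\mathds{F}}$ mutually commute, while the Clifford generators satisfy $\gamma_i\gamma_j + \gamma_j\gamma_i = 2\delta_{ij}\,1_E$, being anticommuting self-adjoint unitaries. Expanding the square of~\eqref{Our-Dirac-eq}, each off-diagonal term $X_iX_j\otimes\gamma_i\gamma_j$ (with $i\ne j$) is paired with $X_jX_i\otimes\gamma_j\gamma_i$; since $X_iX_j = X_jX_i$, their sum carries the factor $\gamma_i\gamma_j+\gamma_j\gamma_i = 0$, and likewise for the terms mixing some $X_j$ with $M_{\mathds{F}}$. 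The diagonal terms contribute $X_j^2$ and $M_{\mathds{F}}^2$ tensored with $\gamma_j^2 = 1_E$, so that
\begin{equation*}
	\Dirac_\infty^2 = \left(\sum_{j=1}^d X_j^2 + M_{\mathds{F}}^2\right)\otimes 1_E \text.
\end{equation*}
The operator in parentheses is multiplication by $z \mapsto \sum_{j=1}^d |x_j(z)|^2 + \mathds{F}(z)^2 = \mathds{L}(z)^2$, i.e. it is $M_{\mathds{L}}^2$. Since $M_{\mathds{L}}\otimes 1_E$ is a positive self-adjoint operator---the multiplication operator by the nonnegative proper function $\mathds{L}$, with domain exactly $H\otimes E$---it is the positive square root of $\Dirac_\infty^2$, whence $|\Dirac_\infty| = M_{\mathds{L}}\otimes 1_E$.

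With this identity in hand, all hypotheses of Proposition~\ref{metric-sp-cor} are satisfied, and that proposition immediately gives that the spectral triple~\eqref{Our-spectral-triple-eq} is metric. The only substantive step is the squaring computation, whose success hinges entirely on the Clifford anticommutation relations cancelling the cross terms, together with the mutual commutation of the multiplication operators. The one technical nicety is the handling of unbounded operators and domains, but this is routine here because $|\Dirac_\infty|$ and $M_{\mathds{L}}\otimes 1_E$ are literally the same self-adjoint multiplication operator on the common domain $H\otimes E$ used to define $\Dirac_\infty$.
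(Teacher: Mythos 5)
Your proposal is correct and follows essentially the same route as the paper's own proof: both reduce to Proposition~\ref{metric-sp-cor} via the identity $|\Dirac_\infty| = M_{\mathds{L}}\otimes 1_E$, which the paper dismisses as ``a simple computation'' and which you carry out explicitly by squaring $\Dirac_\infty$ and using the Clifford anticommutation relations to cancel the cross terms. Your verification of the remaining hypotheses (nilpotency of the abelian group $G_\infty$, the bounded-doubling property from \cite[Corollary 4.17]{FarsiLatremolierePacker22}) matches the paper exactly.
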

%
%
%

\begin{proof}
	Let $\mathds{L} \coloneqq \sqrt{\sum_{j=1}^d x_j^2 + \mathds{F}^2}$. As proven in \cite[Corollary 4.17]{FarsiLatremolierePacker22}, the function $\mathds{L}$ is a proper length function which has bounded  doubling over $\left(\Z\left[\frac{1}{p}\right]\right)^d$. 
	
	A simple computation proves that, for all $\xi \in \ell^2\left(\Z\left[\frac{1}{p}\right]^d,E\right)$ and $g \in \Z\left[\frac{1}{p}\right]^d$,
	\begin{equation}\label{eq:abs value Dirac}
	|\Dirac| \xi (g) =  \left( \sum_{j=1} x_j(g)^2 +\mathds{F}(g)^2\right)^{\frac{1}{2}} \xi(g)=M_{\mathds{L}}\xi (g) \text.
	\end{equation}
	So by Proposition (\ref{metric-sp-cor}), the spectral triple $\left(C^\ast\left(\Z\left[\frac{1}{p}\right]^d,\sigma\right), \ell^2\left(\Z\left[\frac{1}{p}\right]^d,E\right), \Dirac\right)$ is metric.
\end{proof}

We also note the following fact for later use:

\begin{proposition}
	The spectral triple of Equation \eqref{Our-spectral-triple-eq} is finitely summable.
\end{proposition}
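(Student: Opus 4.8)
The plan is to reduce finite summability to the convergence of an explicit Dirichlet-type series over $G_\infty$, and then to organize that series according to the $p$-adic levels recorded by $\mathds{F}$. Recall that finite summability means that $(1+\Dirac_\infty^2)^{-s/2}$ is trace class for some $s\in(0,\infty)$. Since the $\gamma_k$ are anticommuting self-adjoint unitaries, the Clifford relations $\gamma_k\gamma_l+\gamma_l\gamma_k = 2\delta_{kl}$ force every cross term in $\Dirac_\infty^2$ to vanish (the multiplication operators $X_j,M_{\mathds{F}}$ commute with one another), so that $\Dirac_\infty^2 = M_{\mathds{L}^2}\otimes 1_E$, exactly as recorded in Equation \eqref{eq:abs value Dirac}. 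The operator $M_{\mathds{L}^2}$ is diagonal in the canonical basis $(\delta_z)_{z\in G_\infty}$ of $\ell^2(G_\infty)$, with eigenvalue $\mathds{L}(z)^2$ at $\delta_z$, and tensoring with $E$ multiplies every multiplicity by $\dim E$. Consequently
\begin{equation*}
	\tr\left((1+\Dirac_\infty^2)^{-s/2}\right) = \dim(E)\sum_{z\in G_\infty}\left(1+\mathds{L}(z)^2\right)^{-s/2}\text,
\end{equation*}
and it suffices to prove that this last sum is finite for $s$ large enough.

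First I would stratify $G_\infty$ by the level sets of $\mathds{F}$: set $A_0\coloneqq\Z^d$ and $A_n\coloneqq G_n\setminus G_{n-1}$ for $n\geq 1$, so that $\mathds{F}(z)=p^n$ precisely when $z\in A_n$. Each $z\in A_n$ has the form $z=w/p^n$ with $w\in\Z^d\setminus p\Z^d$, and then $\mathds{L}(z)^2 = |w|^2 p^{-2n} + p^{2n}$, writing $|w|^2=\sum_{j=1}^d|x_j(w)|^2$. Enlarging the index set to all of $w\in\Z^d$ only adds positive terms, so
\begin{equation*}
	\sum_{z\in G_\infty}\left(1+\mathds{L}(z)^2\right)^{-s/2} \leq \sum_{n=0}^\infty T_n\text, \qquad T_n \coloneqq \sum_{w\in\Z^d}\left(1 + p^{2n} + |w|^2 p^{-2n}\right)^{-s/2}\text.
\end{equation*}

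Next I would estimate each $T_n$ by comparison with the corresponding integral over $\R^d$, using that the integrand is radially decreasing so that the box-by-box comparison gives $T_n \leq C_d\bigl((1+p^{2n})^{-s/2} + I_n\bigr)$ with a constant $C_d$ depending only on $d$. Rescaling $x=p^n u$ turns $I_n=\int_{\R^d}(1+p^{2n}+|x|^2 p^{-2n})^{-s/2}\dif x$ into $p^{nd}\int_{\R^d}(1+p^{2n}+|u|^2)^{-s/2}\dif u$, and a further rescaling $u=\sqrt{1+p^{2n}}\,v$ shows this equals $p^{nd}(1+p^{2n})^{(d-s)/2}C_{d,s}$, where $C_{d,s}\coloneqq\int_{\R^d}(1+|v|^2)^{-s/2}\dif v$ is finite precisely when $s>d$. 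Hence $T_n = O\!\left(p^{n(2d-s)}\right)$ as $n\to\infty$. The geometric series $\sum_n p^{n(2d-s)}$ converges as soon as $s>2d$, and for every such $s$ the double sum is finite, exhibiting $(1+\Dirac_\infty^2)^{-s/2}$ as trace class; this is exactly finite summability.

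The point requiring care — and the reason the $p$-adic term in $\mathds{L}$ is indispensable — is that $G_\infty$ is dense in $\R^d$, so one cannot invoke the classical lattice estimate $\sum_{z\in\Z^d}(1+|z|^2)^{-s/2}<\infty \Leftrightarrow s>d$: each level $A_n$ already carries a full rescaled copy of $\Z^d$. The two mechanisms must be combined, namely that within a fixed level the Archimedean term $|x(z)|$ supplies convergence (needing $s>d$), while across levels the factor $\mathds{F}(z)=p^n$ supplies the geometric decay that tames the growing lattice density, the two together yielding the threshold $s>2d$ (the effective summability dimension being $2d$ rather than $d$). Making the sum-to-integral comparison uniform in $n$ is the only genuinely technical step, and it is handled by the radial monotonicity of $u\mapsto(1+p^{2n}+|u|^2)^{-s/2}$ together with the standard comparison constant $C_d$ above.
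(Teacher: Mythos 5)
Your proof is correct, and it is the explicit version of an argument the paper only gestures at. The paper's own proof is a two-line citation: bounded doubling of $\mathds{L}$ (already established in \cite[Corollary 4.17]{FarsiLatremolierePacker22}) implies polynomial growth by \cite[Proposition 1.2]{Rieffel15b}, and then \cite[Proposition 6]{Connes89} yields finite summability, since $|\Dirac_\infty| = M_{\mathds{L}}\otimes 1_E$ by Equation \eqref{eq:abs value Dirac}; the paper then remarks that one could instead ``do a direct calculation by using Equation \eqref{eq:abs value Dirac}'', which is precisely what you carry out. Your computation is sound: the Clifford relations do give $\Dirac_\infty^2 = M_{\mathds{L}^2}\otimes 1_E$; the stratification $G_\infty = \bigsqcup_n A_n$ with $A_0 = \Z^d$ and $A_n = G_n\setminus G_{n-1}$ is exactly the level decomposition of $\mathds{F}$; and the sum-to-integral comparison constant is indeed uniform in $n$ (e.g.\ by dyadic shells, using that there are $O_d(R^d)$ lattice points with $|w|\in[R,2R)$ and that the integrand is radial and nonincreasing). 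What your route buys over the paper's is self-containedness and quantitative information: it proves $s$-summability for every $s>2d$, exhibiting the spectral dimension $2d$ --- consistent with the growth $|G_{\mathds{L}}[r]|\asymp r^{2d}$ underlying the bounded-doubling/polynomial-growth route --- and it makes transparent why the $p$-adic term $\mathds{F}$ is indispensable, given that $G_\infty$ is dense in $\R^d$. What the paper's route buys is brevity and reuse of general theory already invoked elsewhere in the paper. One small caveat: your closing assertion that the threshold $s>2d$ is sharp is stated but not proved (you only establish the upper bound); it is true --- restricting the sum to $w\in\Z^d\setminus(p\Z)^d$, a positive proportion of the lattice, gives the matching divergence for $s\leq 2d$ --- but it is not needed for the proposition.
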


\begin{proof} By \cite[Proposition 1.2]{Rieffel15b} our length function $\mathds{L}$ is of polynomial growth since it has bounded doubling.
	Then one can apply \cite[Proposition 6]{Connes89} to end the proof, or do a direct calculation by using  Equation \eqref{eq:abs value Dirac}. \end{proof}


We are now ready to begin the  proof  our convergence result, 
Theorem \eqref{thm:sp  GH lim};  this theorem   will follow from  the Bridge Builder Theorem   \cite[Theorem 3.20]{FarsiLatremolierePacker22}. The first step in its proof will be the technical Lemma \eqref{le:LneqLinf} below.

For $n\in \N\cup\{\infty\}$, define
\begin{equation*}\label{eq:tech lemma Dn}
\dom{\Lip_n}\coloneqq \{ f \in \sa{C^*(G_n, \sigma)} : \    \Lip_n(f)\coloneqq \opnorm{[\Dirac_n,\lambda_n(f)]}{}{\ell^2(G_n, E) }  < \infty\}
\end{equation*}
and 
\begin{equation*}\label{eq:tech lemma defDn}
\forall f \in \dom{\Lip_n}: \quad \Lip_n(f)\coloneqq \opnorm{[\Dirac_n,\lambda_n(f)]}{}{\ell^2(G_n, E) }.
\end{equation*}

\begin{lemma}\label{le:LneqLinf} With notation as above, 
	for all $n \in \N$, if $f \in C_c(G_n)$, then
	\begin{equation*}
	\Lip_n(f) = \Lip_\infty(f) \text.
	\end{equation*}
	
\end{lemma}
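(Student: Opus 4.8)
The plan is to exploit the coset decomposition of $G_\infty$ over the subgroup $G_n$ and the observation that, for $f$ supported in $G_n$, the commutator $[\Dirac_\infty,\lambda_\infty(f)]$ is block diagonal for that decomposition. Concretely, I would fix a transversal $\mathcal{T}$ of $G_n$ in $G_\infty$ and write $\ell^2(G_\infty,E)=\bigoplus_{t\in\mathcal{T}}\ell^2(t+G_n,E)$. Since $\Dirac_\infty$ is a multiplication operator it preserves each summand, and since $\lambda_\infty(g)$ for $g\in G_n$ shifts the support of a function by an element of $G_n$, the operator $\lambda_\infty(f)$ preserves each summand as well; hence $[\Dirac_\infty,\lambda_\infty(f)]$ is block diagonal, its norm is the supremum of the norms of its blocks, and the block on the trivial coset ($t\in G_n$) is exactly $[\Dirac_n,\lambda_n(f)]$. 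This already gives $\Lip_n(f)\le\Lip_\infty(f)$; the real work is the reverse inequality, namely bounding the norm of every nontrivial block by $\Lip_n(f)$.

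The first simplification on a nontrivial coset is arithmetic: if $t\notin G_n$ and $k\in G_n$, then the $p$-adic denominator of $t+k$ is governed entirely by $t$, so $\mathds{F}(t+k)=\mathds{F}(t)=\mathds{F}(t+k-g)$ for every $g\in G_n$. Feeding this into the commutator formula already computed in the proof that these are spectral triples, the whole $\gamma_{d+1}$-term drops out on a nontrivial coset, and what remains differs from $[\Dirac_n,\lambda_n(f)]$ only through the replacement of the phase $\sigma_g^k$ by $\sigma_g^{t+k}$. I would remove this discrepancy by a gauge transformation: conjugate the block by the diagonal unitary $\xi\mapsto\bigl(k\mapsto\sigma(k,t)\,\xi(k)\bigr)$ and use the $2$-cocycle identity $\sigma(g,k-g)\sigma(k,t)=\sigma(g,k-g+t)\sigma(k-g,t)$ to turn $\sigma_g^{t+k}$ back into $\sigma_g^k$. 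After this gauge change, every nontrivial block is unitarily equivalent to the single operator $S_n\coloneqq\sum_{j=1}^d C_j\otimes\gamma_j$, where $C_j$ acts on $\ell^2(G_n)$ by $(C_j\xi)(k)=\sum_{g}f(g)x_j(g)\sigma_g^k\xi(k-g)$; that is, $S_n$ is the ``$\gamma_1,\dots,\gamma_d$-part'' of $[\Dirac_n,\lambda_n(f)]$.

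It then remains to prove $\opnorm{S_n}{}{}\le\Lip_n(f)$, i.e. that deleting the $\gamma_{d+1}$-term cannot increase the norm, and here I would invoke the Clifford grading. Writing $[\Dirac_n,\lambda_n(f)]=S_n+R_n$ with $R_n=C_{d+1}\otimes\gamma_{d+1}$ the $\gamma_{d+1}$-term, and setting $\Gamma\coloneqq 1\otimes\gamma_{d+1}$ (a self-adjoint unitary), the anticommutation relations give $\Gamma S_n\Gamma=-S_n$ and $\Gamma R_n\Gamma=R_n$, whence $S_n=\tfrac12\bigl([\Dirac_n,\lambda_n(f)]-\Gamma[\Dirac_n,\lambda_n(f)]\Gamma\bigr)$; since conjugation by $\Gamma$ is isometric, the triangle inequality yields $\opnorm{S_n}{}{}\le\Lip_n(f)$. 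Combining the three steps, every block of $[\Dirac_\infty,\lambda_\infty(f)]$ has norm at most $\Lip_n(f)$ while the trivial block attains it, so $\Lip_\infty(f)=\Lip_n(f)$.

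I expect the crux to be the treatment of the nontrivial blocks, since it is not a priori clear that the trivial coset realizes the supremum: the nontrivial blocks both carry a different cocycle phase and, more importantly, lack the $\gamma_{d+1}$-contribution present on $G_n$. The two devices that resolve this — the gauge transformation built from $\sigma(\cdot,t)$ through the cocycle identity, and the grading conjugation by $1\otimes\gamma_{d+1}$ that exhibits $S_n$ as an average of $[\Dirac_n,\lambda_n(f)]$ and a unitary conjugate of it — are the substantive points; the coset block-decomposition and the easy inequality $\Lip_n(f)\le\Lip_\infty(f)$ are routine by comparison.
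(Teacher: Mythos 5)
Your proof is correct, and it shares the paper's skeleton --- the same coset decomposition $\ell^2(G_\infty,E)=\overline{\bigoplus_{k\in C_n}}\,\ell^2(k+G_n,E)$, the block-diagonality of $[\Dirac_\infty,\lambda_\infty(f)]$ for $f\in C_c(G_n)$, the identification of the trivial block with $[\Dirac_n,\lambda_n(f)]$, and the arithmetic fact that $\mathds{F}$ is constant on each nontrivial coset so the $\gamma_{d+1}$-term drops there --- but it genuinely diverges at the crux, namely bounding the nontrivial blocks by $\Lip_n(f)$. The paper does this by a direct computation: it asserts the term-by-term identity $\norm{[\Dirac_\infty,\lambda_\infty(f)]\xi(k+h)}{E}^2=\sum_{g\in G_n}|f(g)|^2\norm{\bigl(\sum_j x_j(g)\gamma_j+\cdots\bigr)\xi(k+h-g)}{E}^2$ and concludes a pointwise equality of norms with $[\Dirac_n,\lambda_n(f)]$, from which the operator-norm bound follows. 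As written, that expansion discards the cross terms between distinct $g$ (the summands $f(g)\bigl(\sum_j x_j(g)\gamma_j\bigr)\sigma_g^{k+h}\xi(k+h-g)$ all land in the same fiber $E$, so no Pythagorean identity applies), and its displayed chain even carries a $(\mathds{F}(h)-\mathds{F}(h-g))^2$ term that the preceding step shows should vanish; only the final inequality $\opnorm{[\Dirac_\infty,\lambda_\infty(f)]}{}{\ell^2(k+G_n,E)}\leq\Lip_n(f)$ is actually needed. Your two devices deliver exactly that inequality rigorously: the gauge unitary built from $\sigma(\cdot,t)$ (your identity is the $2$-cocycle relation with $(a,b,c)=(g,k-g,t)$, and conjugation does convert $\sigma_g^{t+k}$ into $\sigma_g^k$) identifies every nontrivial block with the single coset-independent operator $S_n=\sum_{j=1}^d C_j\otimes\gamma_j$, and the grading identity $S_n=\tfrac{1}{2}\bigl(T-\Gamma T\Gamma\bigr)$, with $T=[\Dirac_n,\lambda_n(f)]$ and $\Gamma=1\otimes\gamma_{d+1}$ a self-adjoint unitary satisfying $\Gamma S_n\Gamma=-S_n$ and $\Gamma R_n\Gamma=R_n$, gives $\opnorm{S_n}{}{\ell^2(G_n,E)}\leq\Lip_n(f)$ with no pointwise norm computation at all. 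So the comparison favors you on robustness: the gauge-plus-grading argument never requires phases or cross terms to cooperate and in effect repairs the gap in the paper's write-up, at the mild cost of being less explicit; the paper's computational route, where it is valid, yields finer pointwise information, but your version is the one that closes the estimate cleanly, and both yield $\Lip_\infty(f)=\Lip_n(f)$ since the trivial block attains the supremum.
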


\begin{proof} 
Recall that for all $g\in G_\infty$,  $ C^* (G_\infty,\sigma)$ act via the  left regular $\sigma$-projective representation $\lambda_\infty$ on $\ell^2(G_\infty,E)$ defined for all $\xi \in \ell^2(G_\infty,E)$ by:
	\begin{equation*}
	\lambda_\infty(g)\xi :  G_\infty \ni h  \longmapsto \sigma(g,h-g)\xi(h-g) \text.
	\end{equation*}
	 Now observe that, for $\xi \in \dom{\Dirac_\infty}$ and  $g,h \in G_\infty$ we have, if we define $\sigma_{g}^h \coloneqq \sigma(  g, h-g )$:

	Now fix $n\in \N$.    Let $C_n\subseteq G_\infty$ with $0 \in C_n$ (where $0$ is the zero in the group $G_\infty$), and such that the restriction of the canonical surjection from $G_\infty$ onto $\bigslant{G_\infty}{G_n}$ is a bijection --- i.e., every coset of $G_n$ is of the form $k + G_n$ for a unique $k \in C_n$; moreover if $k \in C_n$ and $k\neq 0$, then $k \notin G_n$.
	We then have by construction that
	\begin{equation*}
	\ell^2(G_\infty,E) = \overline{\oplus_{k \in C_n}} \ell^2(k+G_n,E) \text.
	\end{equation*}
	Moreover, for all $k \in C_n$, we also note that $\Dirac_\infty\ell^2(k+G_n,E) \subseteq \ell^2(k+G_n,E)$; in addition  for any  $f \in C_c(G_n)$ it also follows:  $\lambda_\infty(f) \ell^2(k+G_n,E) \subseteq\ell^2(k+G_n)$. Therefore, $[\Dirac_\infty,\lambda_\infty(f)] \ell^2(k+G_n,E) \subseteq \ell^2(k+G_n,E)$. Thus, if to  simplify (and with slight abuse of) the notation we write 
	$\opnorm{[\Dirac_\infty,\lambda_\infty(f)]}{}{\ell^2(k+G_n,E)}$ for $\opnorm{[\Dirac_\infty,\lambda_\infty(f)]_{|\ell^2(k+G_n,E)}}{}{\ell^2(k+G_n,E)}$ we have:
	\begin{equation}\label{eqdef:defofsup}
	\opnorm{[\Dirac_\infty,\lambda_\infty(f)]}{}{\ell^2(G_\infty,E)} = \sup_{k \in C_n} \opnorm{[\Dirac_\infty,\lambda_\infty(f)]}{}{\ell^2(k+G_n,E)} \text.
	\end{equation}
	
	By the definitions, $\opnorm{[\Dirac_\infty,\lambda_\infty(f)]_{|\ell^2(G_n,E)}}{}{\ell^2(G_\infty,E)} = \opnorm{[\Dirac_n,\lambda_n(f)]}{}{\ell^2(G_n,E)}$.
	
  Now, fix $k\in C_n$, $k\not= 0$, and let $\xi \in \ell^2(G_n + k,E)$ with $\norm{\xi}{\ell^2(G_n+k,E)}\leq 1$.  
  
  Now, note that for  each $h \in G_n$:
	\begin{align*}
	[\Dirac_\infty,\lambda_\infty(f)]\xi(k + h) &= \sum_{g \in G_\infty}\left( \sum_{j=1}^d x_j(g)\gamma_j \right)\sigma_g^{k+h} f(g)\xi(k + h-g)\\
	&+ \sum_{g \in G_\infty}\left(\mathds{F}(k + h)-\mathds{F}(k + h-g))\gamma_{d+1}\right)\sigma_g^{k+h} f(g)\xi(k + h-g) \text.
	\end{align*}
	
	Now, $k \notin G_n$ by construction. Let $m = \min\{ r \in \N : k \in G_r \}$. Then $k \in G_r$ and $r>n$. For any $w\in G_n$, we note that $k + w \in G_m$ since $G_n \subseteq G_m$. Moreover, if $k+w \in G_p$ for some $p \in \{n,\ldots,m-1\}$, then $k \in G_p - w \subseteq G_p$, and we get a contradiction with the minimality of $r$. So $\min\{r\in\N: k+w \in G_r\} = m$. Consequently:
	\begin{equation*}
		\mathds{F}(k + h - g) = \mathds{F}(k + h) \text.
	\end{equation*}
	we thus get
	\begin{align*}
	\quad&\norm{[{\Dirac}_\infty,\lambda_\infty(f)]\xi(k + h)}{E}^2 \\
	&=\sum_{g \in G_n} |f(g)|^2 \norm{(\sum x_j(g)\gamma_j +(\mathds{F}(k + h)-\mathds{F}(k + h-g)) \gamma_{d+1})\xi(k+h-g)}{E}^2 \\
	&=\sum_{g \in G_n} |f(g)|^2 \left(\sum |x_j(g)|^2 + (\mathds{F}(k + h)-\mathds{F}(k + h-g))^2 \right) \norm{\xi(k+h-g)}{E}^2 \\
	&=\sum_{g \in G_n} |f(g)|^2 \left(\sum |x_j(g)|^2 + ( \mathds{F}(h)- \mathds{F}(h-g) )^2 \right) \norm{\xi(k+h-g)}{E}^2 \\
	&= \sum_{g \in G_n} |f(g)|^2\norm{\left(\sum x_j(g)\gamma_j \right)\xi(k+h-g)}{E}^2\\
	&=\norm{[\Dirac_n,\lambda_n(f)]\xi(k+h)}{E}^2\text.
	\end{align*}
	
	Now
	\begin{equation}\label{eq-eq}
	\begin{split}
	\norm{[{\Dirac}_\infty,\lambda_\infty(f)]\xi}{\ell^2(G_n+k,E)}^2 
	&=\sum_{h \in G_n} \norm{[{\Dirac}_\infty,\lambda_\infty(f)]\xi(k+h)}{E}^2 \\
	&= \sum_{h\in G_n} \norm{[\Dirac_n,\lambda_n(f)]\xi(k+h)}{E}^2 \\
	&\leq \opnorm{[\Dirac_n,\lambda_n(f)}{2}{\ell^2(G_n,E)} \norm{\xi}{\ell^2(G_n+k,E)}^2 \\
	&= \opnorm{[\Dirac_n,\lambda_n(f)}{2}{\ell^2(G_n,E)}\text.
	\end{split}
	\end{equation}
	
	Hence $\opnorm{[{\Dirac}_{\infty},\lambda_\infty(f)]}{}{\ell^2(G_n+k,E)} \leq \opnorm{[\Dirac_n,\lambda_n(f)}{}{\ell^2(G_n,E)}$. By Equations \eqref{eqdef:defofsup} and \eqref{eq-eq}, we conclude
	\begin{equation*}
	\opnorm{[{\Dirac}_{\infty},\lambda_\infty(f)]}{}{\ell^2(G_\infty,E)} = \opnorm{[{\Dirac}_n,\lambda_n(f)]}{}{\ell^2(G_n,E)}\text,
	\end{equation*}
	as needed.
\end{proof}

With the lemma above, we are now in a position to  prove our convergence result. The key step in the proof of Theorem 
\eqref{thm:sp  GH lim} is an application of the Bridge Builder Theorem  \cite[Theorem 3.20]{FarsiLatremolierePacker22}, with the bridge builder $\pi$ equal to the identity of $C^\ast(G_\infty,\sigma)$ (the identity  is obviously a quantum isometry). For the convenience of the reader,  we recall that the identity is a bridge builder when, for all $\varepsilon > 0$, there exists $N\in\N$ such that if $n\geq N$, then
\begin{equation}\label{eq:bb-1}
\forall a \in \dom{\Lip_\infty} \quad \exists b \in \dom{\Lip_n}: \quad \Lip_n(b) \leq \Lip_\infty(a) \text{ and }\norm{a - b}{C^\ast(G_\infty,\sigma)} < \varepsilon\Lip_\infty(a) \\
\end{equation}
and
\begin{equation}\label{eq:bb-2}
\forall b \in \dom{\Lip_n} \quad \exists a \in \dom{\Lip_\infty}: \quad \Lip_\infty(a)\leq\Lip_n(b) \text{ and }\norm{a-b}{C^\ast(G_\infty,\sigma)}<\varepsilon\Lip_n(b) \text.
\end{equation}

We thus establish the following.
\begin{theorem} \label{thm:sp  GH lim}
	Using the notation of Equations \eqref{Our-spectral-triple-eq} and \eqref{Our-spectral-triple-n-eq}), we obtain
	\begin{equation*}
		\lim_{n\rightarrow\infty} \spectralpropinquity{}((C^\ast(G_\infty,\sigma),\ell^2(G_\infty,E),\Dirac_\infty),(C^\ast(G_n,\sigma),\ell^2(G_n,E),\Dirac_n)) = 0\text,
	\end{equation*}
	where $\spectralpropinquity{}$ denotes the spectral propinquity on metric spectral triples of \cite{Latremoliere21a, Latremoliere18g, Latremoliere22}.
\end{theorem}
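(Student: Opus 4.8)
The plan is to invoke the Bridge Builder Theorem \cite[Theorem 3.20]{FarsiLatremolierePacker22} with the bridge builder $\pi$ taken to be the identity automorphism of $C^\ast(G_\infty,\sigma)$, which is obviously a quantum isometry. Both endpoints are genuine metric spectral triples: the solenoid triple by Theorem \ref{thm:triple is metric}, and each quantum torus triple by Proposition \ref{metric-sp-cor} applied to $G_n$ (abelian, hence nilpotent, with $\mathds{F}$ bounded by $p^n$ on $G_n$ so that $\mathds{L}|_{G_n}$ is a proper bounded-doubling length function and $|\Dirac_n| = M_{\mathds{L}|_{G_n}}\otimes 1_E$). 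It then remains to verify the bridge builder conditions \eqref{eq:bb-1} and \eqref{eq:bb-2}. First I would observe that the coset-decomposition argument of Lemma \ref{le:LneqLinf} extends verbatim from $C_c(G_n)$ to all self-adjoint $b \in C^\ast(G_n,\sigma)$, so that $\Lip_\infty(b) = \Lip_n(b)$ whenever $b \in \dom{\Lip_n}$; condition \eqref{eq:bb-2} is then immediate by taking $a = b$, giving $\Lip_\infty(a) = \Lip_n(b)$ and $\norm{a-b}{C^\ast(G_\infty,\sigma)} = 0$, for every $n$.

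The substance of the proof is condition \eqref{eq:bb-1}, for which the natural candidate is $b = \Exp_n(a)$, where $\Exp_n : C^\ast(G_\infty,\sigma)\to C^\ast(G_n,\sigma)$ is the canonical conditional expectation truncating Fourier coefficients to $G_n$, i.e. $\Exp_n(\sum_g a(g)\lambda_\infty(g)) = \sum_{g\in G_n} a(g)\lambda_\infty(g)$. The key structural fact I would establish is that $\Exp_n$ does not increase the Lipschitz seminorm. To see this, realize $\Exp_n$ as an average over the compact annihilator $\Gamma_n := G_n^\perp \subseteq \widehat{G_\infty}$: for $\chi \in \Gamma_n$, multiplication by $\chi$ defines a unitary $U_\chi$ on $\ell^2(G_\infty)$ satisfying $(U_\chi\otimes 1_E)\lambda_\infty(g)(U_\chi\otimes 1_E)^\ast = \chi(g)\lambda_\infty(g)$ and, crucially, commuting with the multiplication operators $X_j$ and $M_{\mathds{F}}$, so that $U_\chi\otimes 1_E$ commutes with $\Dirac_\infty$. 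Consequently the dual action $\alpha_\chi := \mathrm{Ad}_{U_\chi\otimes 1_E}$ is a $\Lip_\infty$-isometry, and since $\int_{\Gamma_n}\chi(g)\dif\chi$ equals $1$ for $g\in G_n$ and $0$ otherwise, one has $\Exp_n(a) = \int_{\Gamma_n}\alpha_\chi(a)\dif\chi$. Lower semicontinuity and convexity of the seminorm $\Lip_\infty$ then yield $\Lip_\infty(\Exp_n(a)) \leq \Lip_\infty(a)$, and combined with $\Lip_n = \Lip_\infty$ on $C^\ast(G_n,\sigma)$ this gives $\Lip_n(\Exp_n(a)) \leq \Lip_\infty(a)$, the seminorm half of \eqref{eq:bb-1}.

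The main obstacle is the norm estimate $\norm{a - \Exp_n(a)}{C^\ast(G_\infty,\sigma)} < \varepsilon\,\Lip_\infty(a)$. By homogeneity I would reduce to the case $\Lip_\infty(a) = 1$ (the case $\Lip_\infty(a)=0$ forces $a$ scalar, so $\Exp_n(a)=a$), and, since $\Exp_n$ preserves the canonical tracial state, replace $a$ by its trace-zero part without altering $a - \Exp_n(a)$ or $\Lip_\infty(a)$; the quantum compact metric space property from Theorem \ref{thm:triple is metric} then bounds this part in norm by the finite diameter $R$. The crux is to upgrade the pointwise convergence $\Exp_n(a)\to a$ (clear for $a\in C_c(G_\infty)$, where $\Exp_n(a)=a$ for $n$ large, and extended to all of $C^\ast(G_\infty,\sigma)$ using $\norm{\Exp_n}{}\leq 1$) to convergence that is \emph{uniform} over the Lipschitz ball $\{a : \Lip_\infty(a)\leq 1,\ \norm{a}{}\leq R\}$.

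Here I would invoke precisely the total norm-boundedness of this ball guaranteed by the metric property: choosing a finite $\tfrac{\varepsilon}{3}$-net $a_1,\dots,a_m$, selecting $N$ with $\norm{a_i - \Exp_n(a_i)}{} < \tfrac{\varepsilon}{3}$ for all $i$ and all $n\geq N$, and using that each $\Exp_n$ is a contraction, a standard three-$\varepsilon$ estimate gives $\norm{a - \Exp_n(a)}{} < \varepsilon$ for every $a$ in the ball and every $n\geq N$. This establishes \eqref{eq:bb-1} with $b = \Exp_n(a)$, and the Bridge Builder Theorem then delivers the claimed convergence $\lim_{n\to\infty}\spectralpropinquity{}(\cdots) = 0$. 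I expect the delicate point to be this last passage from pointwise to uniform approximation, which is exactly where the bounded-doubling hypothesis enters, through its consequence (Theorem \ref{thm:triple is metric}) that the Lipschitz ball is norm-totally-bounded modulo scalars.
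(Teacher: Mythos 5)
Your proposal is correct, and it follows the paper's overall architecture: the Bridge Builder Theorem of \cite{FarsiLatremolierePacker22} with $\pi$ the identity, condition \eqref{eq:bb-2} obtained from the equality $\Lip_n = \Lip_\infty$ on $C^\ast(G_n,\sigma)$, and condition \eqref{eq:bb-1} obtained from a $\Lip_\infty$-contractive averaging of the dual action combined with total boundedness of the Lipschitz ball (the metric property) and a three-$\varepsilon$ net argument. The one genuine divergence is the averaging device. The paper integrates the dual action against Fej{\'e}r-type kernels $\varphi_k$, producing unital positive maps $\mathds{E}_k$ with finite-dimensional range in $C_c(G_n,\sigma)$; since these only converge to the identity on compactly supported elements, the paper must also invoke the density fact \eqref{fejer-eq} to replace the net elements by elements of $C_c(G_n)$, and it juggles two indices $k,n$. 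You instead use the sharp conditional expectation $\Exp_n = \int_{G_n^\perp}\alpha_\chi \dif\chi$ onto $C^\ast(G_n,\sigma)$, which fixes $\lambda_\infty(C_c(G_m))$ exactly for $n\geq m$, so pointwise convergence $\Exp_n(a)\to a$ holds on all of $C^\ast(G_\infty,\sigma)$ by mere norm-density of $C_c(G_\infty)$; this bypasses \eqref{fejer-eq} entirely, and the finite rank of the paper's $\mathds{E}_k$ is never actually used, so nothing is lost (the contractivity of $\Lip_\infty$ under your averaging is justified exactly as in the paper: the unitaries implementing the dual action commute with $\Dirac_\infty$, plus lower semicontinuity of the commutator seminorm, as in Rieffel's averaging argument \cite{Rieffel98a}). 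You are also somewhat more careful than the paper on two points: you verify that the torus triples $(C^\ast(G_n,\sigma),\ell^2(G_n,E),\Dirac_n)$ are themselves metric, which is needed even to state the theorem, and you flag that Lemma \eqref{le:LneqLinf} must be extended from $C_c(G_n)$ to all of $\dom{\Lip_n}$ --- the paper uses this extension silently in its converse step, and you need it additionally because $\Exp_n(a)$, unlike $\mathds{E}_N(a)$, need not be compactly supported. Only your word ``verbatim'' there is slightly optimistic: a general $b\in\dom{\Lip_n}$ has no norm-convergent Fourier expansion, so the extension should instead be argued from the invariance of the coset subspaces $\ell^2(k+G_n,E)$ under $\lambda_\infty(C^\ast(G_n,\sigma))$ and $\Dirac_\infty$, the constancy of $\mathds{F}$ on the nontrivial cosets, and a grading projection (conjugation by $1\otimes\gamma_{d+1}$) to discard the $\gamma_{d+1}$-term on those cosets; this is a routine repair and leaves your proof sound.
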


\begin{proof} 
	 We start by 
	letting  $\widehat{G_n}$ denote the Pontryagin dual of $G_n$ (we will use the multiplicative notation for $\widehat{G_n}$). The dual action $\beta$ of $\widehat{G_n}$ on $C^\ast(G_n,\sigma)$ is unitarily implemented by defining, for each $z\in \widehat{G_n}$, the unitary $v^z$ of $\ell^2(G_n,E)$ which is given by,  for all $\xi \in \ell^2(G_n)\otimes E$:
	\begin{equation*}
	\quad v^z \xi : g \in G_n \longmapsto \overline{z(g)} \xi(g) ( \, = z(-g)\xi(g) )\text.
	\end{equation*}
	
	We then note that:
	\begin{equation*}
	\forall z \in \widehat{G_n} \quad  v^z \lambda_E(g) (v^z)^\ast = \beta^z \lambda_E(g) \text.
	\end{equation*}
	
	By construction, $\Dirac_n$ commutes with $v^z$ for all $z \in \widehat{G_n}$, so $\beta$ acts by full quantum isometries on the quantum compact metric space  $(C^\ast(G_n,\sigma),\Lip_n)$, for all $n$.
	
Let $\mu$ be the Haar probability measure on the Pontryagin dual $\widehat{G_n}$. As seen in \cite[Lemma 3.1]{Latremoliere05} and  \cite[Theorem 8.2]{Rieffel00}, there exists a sequence $(\varphi_k)_{k\in\N}$ of non-negative functions on $\widehat{G_n}$, each obtained as a finite linear combination of characters of $\widehat{G_n}$ (i.e. of the form $z\in \widehat{G_n} \mapsto z(g)$ for some $g \in G_n$, by Pontryagin duality), such that $\int_{\widehat{G_n}}\varphi_k \,d\mu = 1$ for all $k\in \N$, and $(\varphi_k)_{k\in\N}$ converges, in the sense of distributions, to the Dirac measure at $1 \in \widehat{G_n}$, i.e., for all $f \in C(\widehat{G_n})$,
\begin{equation*}
\lim_{k\rightarrow\infty} \int_{\widehat{G_n}} f(z) \, \varphi_k(z)d\mu(z) = f(1) \text.
\end{equation*}
We define, for each $k,n \in \N$, the continuous linear endomorphism:
\begin{equation*}
\mathds{E}_k: C^*(G_n,\sigma)\to C^*(G_n,\sigma),\quad \mathds{E}_k: a \mapsto \int_{\widehat{G_n}} \beta^z(a) \, \varphi_k(z)d\mu(z) \text.
\end{equation*}
It it shown in \cite[Lemma 3.1]{Latremoliere05} and  \cite[Theorem 8.2]{Rieffel00} that the  range of $\mathds{E}_k$ is a finite dimensional subspace of $C^*(G_n,\sigma)$, and so contained in $C_c(G_n,\sigma)$.  Moreover, since the dual action acts by quantum isometries, and for all $n$, $\Lip_n$ is lower semi-continuous, it follows that  
given $a\in \dom{\Lip_n}$, and $\varepsilon > 0$, there exists $K \in \N$ such that if $k\geq K$, then $\norm{a-\mathds{E}_{k}(a))}{C^\ast(G_n,\sigma)} < \varepsilon$ and 
$\Lip_{n}(\mathds{E}_{k}(a))\leq\Lip_n(a)$.  
Moreover (cf. \cite[Equation 4.10]{FarsiLatremolierePacker22}):
\begin{equation}\label{fejer-eq}
\left\{ a\in\dom{\Lip_\infty}: \Lip_\infty(a) \leq 1 \right\} = \overline{\left\{ b \in \dom{\Lip_\infty}\cap C_c(G_n,\sigma) : \Lip_\infty(b) \leq 1 \right\}} \text.
\end{equation}

Now let $\varepsilon > 0$ and fix $\mu \in \StateSpace(C^\ast(G_\infty,\sigma))$. Since $(C^\ast(G_\infty,\sigma),\ell^2(G_\infty,E),\Dirac_\infty)$ is a metric spectral triple, there exists a finite set $F\subseteq \dom{\Lip_\infty}$ such that $\Lip_\infty(b) \leq 1$ and $\mu(b) = 0$ for all $b\in F$, and if $a\in\dom{\Lip_\infty}$ with $\Lip_\infty(a)\leq 1$ and $\mu(a) = 0$, then there exists $b \in F$ such that $\norm{a-b}{C^\ast(G_\infty,\sigma)} < \frac{\varepsilon}{3}$. Moreover, by Equation \eqref{fejer-eq},
without loss of generality we can assume $b\in C_c(G_n)$, for all $n\geq N'$, for some $N'\in \N$. 
	
Since $F$ is finite, there exists $N\in\N$ such that, for all $b \in F$ and $n>N$, we have 

\[\norm{b-\mathds{E}_N(b)}{C^\ast(G_\infty,\sigma)}=\norm{b-\mathds{E}_N(b)}{C^\ast(G_n,\sigma)} < \frac{\varepsilon}{3}.\] 

Thus, if $a \in \dom{\Lip_\infty}$ 
with $\Lip_\infty(a)\leq 1$ and $\mu(a) = 0$, then choosing $b \in F$ with $\norm{a-b}{C^\ast(G_\infty,\sigma)} < \frac{\varepsilon}{3}$, we conclude that
\begin{align*}
\norm{a-\mathds{E}_N(a)}{C^\ast(G_\infty,\sigma)} &\leq \norm{a-b}{C^\ast(G_\infty,\sigma)} + \norm{b-\mathds{E}_N(b)}{C^\ast(G_\infty,\sigma)} +\\ &+\norm{\mathds{E}_N(b-a)}{C^\ast(G_\infty,\sigma)} < \varepsilon \text.
\end{align*}

Moreover, by Lemma \eqref{le:LneqLinf}, we also note that $\Lip_n(\mathds{E}_N(a)) = \Lip_\infty(\mathds{E}_N(a)) \leq \Lip_\infty(a)$.

Conversely, let $b \in \dom{\Lip_n}$. Then $\Lip_\infty(b) = \Lip_n(b)$ and of course, $\norm{b-b}{} = 0 < \varepsilon$.

Therefore we have checked Equations 
\eqref{eq:bb-1} and \eqref{eq:bb-2} and so  the identity  is a bridge builder. We  conclude that our theorem holds by \cite[Theorem 3.20]{FarsiLatremolierePacker22}.
\end{proof}

%
%

\section{$K$-homology for noncommutative $2$-solenoids and metric spectral triples}
\label{sec:K-hom}

We compute here the $K$-homology for noncommutative $2$-solenoids following \cite{Latremoliere11c}. However we note that, since these algebras arise as inductive limits of $C^*-$algebras, their $K$-homology is not very rich; this  holds even more markedly in the case of Bunce-Deddens algebras, which have their  $K^0$-groups  equal to zero  \cite{Klimek23}.
See also   \cite{weibel} Exercise 3.5.3 on page 85 and \cite{Latremoliere11c}.

We start by noting that we can apply the UCT to noncommutative solenoids because they are inductive limits of nuclear $C^*$-algebras, hence nuclear. 

By applying the universal coefficient theorem  to the $C^*$-algebras $A,\C$, we get  the short exact sequences of  \cite{Rosenberg-Schochet} \cite[page 9]{dadarlat}, in accordance with \cite[Corollary 1.19]{Rosenberg-Schochet}, \cite[Exercise 23.15.1]{blackadar} and also in agreement with \cite[Remark 4.6]{SchochetII}:

\begin{equation}\label{eq:odd case KK}
0\to Ext^1_\Z(K_0(A), \Z) \to 
KK_1(A,\C) \to   Hom(K_1(A),  \Z)\to 0
\end{equation}

and

\begin{equation}\label{eq:even case KK}
0\to   Ext^1_\Z(K_1(A), \Z) \to 
KK_0(A,\C) \to Hom((K_0(A), \Z))\to 0.
\end{equation}

This latter formula is also corroborated by \cite[Proposition 5.6]{Klimek23}.
We recall that the  above short exact sequences  split (unnaturally).  Moreover by \cite[Equation (3.3)]{Latremoliere11c}:

\[
K_0(\nctwosolenoid)\hbox{ is an extension of $\Z[1/p]$ by $\Z$};  \quad K_1(\nctwosolenoid) \cong \Z[1/p]^2,
\]

that is,

\begin{equation}\label{eq:key ext}
0 \to \Z \xrightarrow{\iota} K_0(\nctwosolenoid) \to \Z[1/p] \to 0.
\end{equation}

We have, see  \cite[Remark 3.17, Page 183]{Latremoliere11c} or  Weibel's book \cite{weibel} Exercise 3.5.3 on page 85:

\[
Ext(\Z[1/p], \Z ) \cong \faktor{I_p}{\Z},
\]
where $I_p$ is the group of $p$-adic integers.
(From now on $Ext^1$ will be just called $Ext$ for simplicity.)

We can now apply $Hom(., \Z) $ to the short exact sequence of Equation \eqref{eq:key ext}, to get

\begin{equation}
\begin{split}
0\to Hom(\Z[1/p], \Z ) \to Hom(K_0(\nctwosolenoid), \Z ) \to Hom(\Z, \Z )\cong \Z \to\\
Ext(\Z[1/p], \Z ) \to Ext(K_0(\nctwosolenoid), \Z ) \to 0.
\end{split}
\end{equation}

Since $Hom(\Z[1/p], \Z ) \cong 0$, this short exact sequence simplifies to:

\begin{equation}
\begin{split}
0\to  Hom(K_0(\nctwosolenoid), \Z ) \to Hom(\Z, \Z )\cong \Z \xrightarrow{i}\\
Ext(\Z[1/p], \Z ) \to Ext(K_0(\nctwosolenoid), \Z ) \to 0.
\end{split}
\end{equation}

It follows that

\[
Ext(K_0(\nctwosolenoid), \Z ) \cong Ext(\Z[1/p], \Z )/ i(\Z) \cong \big( I_p /\Z \big)/ i(\Z) .
\]

In conclusion, from Equations \eqref{eq:odd case KK} and \eqref{eq:even case KK} together with the above calculations we get

\begin{proposition} 
The $K$-homology groups of the noncommutative solenoids are given by:
\begin{equation}\label{eq:odd case KK fin}
K^1(\nctwosolenoid) \cong  (I_p /\Z )/ i(\Z) .
\end{equation}

and

\begin{equation}\label{eq:even case KK fin}
K^0(\nctwosolenoid) \cong   \left({ I_p}/{\Z}\right)^2 \oplus 
  Hom((K_0(\nctwosolenoid), \Z)).
\end{equation}
	
	\end{proposition}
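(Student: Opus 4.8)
The plan is to read off both K-homology groups directly from the two universal-coefficient short exact sequences \eqref{eq:odd case KK} and \eqref{eq:even case KK}, applied to $A = \nctwosolenoid$, by inserting the abelian groups $K_0(A)$ and $K_1(A)$ and the Ext-computations already assembled above. Since $K^1(A) = KK_1(A,\C)$ and $K^0(A) = KK_0(A,\C)$, everything reduces to homological algebra over $\Z$: the $C^*$-algebraic input enters only through $K_0(A)$, an extension of $\Z[1/p]$ by $\Z$ as in \eqref{eq:key ext}, and through $K_1(A) \cong \Z[1/p]^2$.

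For the odd case \eqref{eq:odd case KK fin}, I would first record that $Hom(\Z[1/p],\Z) = 0$: the image under any homomorphism $\Z[1/p]\to\Z$ of any element is divisible by arbitrarily high powers of $p$ in $\Z$ (because $p^{-n} = p^k p^{-n-k}$), hence is zero. By additivity this gives $Hom(K_1(A),\Z) = Hom(\Z[1/p]^2,\Z) = 0$, so the right-hand term of \eqref{eq:odd case KK} vanishes and that sequence collapses to an isomorphism $K^1(A) \cong Ext(K_0(A),\Z)$. The $Hom(-,\Z)$-long exact sequence applied to \eqref{eq:key ext} already identified $Ext(K_0(A),\Z) \cong (I_p/\Z)/i(\Z)$, which is exactly \eqref{eq:odd case KK fin}.

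For the even case \eqref{eq:even case KK fin}, I would compute the left-hand term of \eqref{eq:even case KK}: by additivity of Ext over finite direct sums, $Ext(K_1(A),\Z) = Ext(\Z[1/p]^2,\Z) \cong Ext(\Z[1/p],\Z)^2 \cong (I_p/\Z)^2$, using the identification $Ext(\Z[1/p],\Z)\cong I_p/\Z$ recalled above. Invoking the (unnatural) splitting of the UCT sequence then upgrades \eqref{eq:even case KK} to the direct-sum decomposition $K^0(A)\cong (I_p/\Z)^2 \oplus Hom(K_0(A),\Z)$, which is \eqref{eq:even case KK fin}. I would leave the summand $Hom(K_0(A),\Z)$ unsimplified, noting only that the same long exact sequence exhibits it as $\ker i \subseteq \Z$.

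The substantive difficulties all lie upstream of this assembly and have already been dispatched: verifying that the UCT applies (for which nuclearity of the solenoid, coming from its realization as an inductive limit of nuclear $C^*$-algebras, is the key input), computing $K_*(A)$ from \cite{Latremoliere11c}, and evaluating $Ext(\Z[1/p],\Z)$ together with the cokernel $Ext(K_0(A),\Z)$. Given those, the proof of the proposition proper has no real obstacle; the only points requiring any care are the vanishing $Hom(\Z[1/p],\Z)=0$, the additivity of Ext used in the even case, and the appeal to the non-canonical splitting of \eqref{eq:even case KK} to promote the extension to a direct sum.
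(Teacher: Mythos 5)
Your proposal is correct and follows essentially the same route as the paper: both read off $K^1$ and $K^0$ from the two UCT sequences \eqref{eq:odd case KK} and \eqref{eq:even case KK}, using $Hom(\Z[1/p],\Z)=0$ to collapse the odd sequence to $K^1(\nctwosolenoid)\cong Ext(K_0(\nctwosolenoid),\Z)\cong (I_p/\Z)/i(\Z)$, additivity of $Ext$ with $Ext(\Z[1/p],\Z)\cong I_p/\Z$ for the even left-hand term, and the unnatural splitting to obtain the direct sum in \eqref{eq:even case KK fin}. Your treatment is, if anything, slightly more explicit than the paper's (e.g.\ the divisibility argument for $Hom(\Z[1/p],\Z)=0$ and the identification of $Hom(K_0(\nctwosolenoid),\Z)$ with $\ker i$), but there is no substantive difference in approach.
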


We will now use our prior results to prove that 
no even, metric, and summable spectral triple on   noncommutative $2$-solenoids $\nctwosolenoid$ can represent a non-trivial $K^0$-homology class detected by using the index map.

For, say we have such a metric spectral triple on the noncommutative $2$-solenoid. Then, if we denote by $D$ the spectral triple's Dirac operator, we have:

\[
[D,1] = 0,
\]

because  for metric spectral triples  we must have $L_D(1) = 0$, as  $L_D(a)$ is the norm of $[D,a]$, for all $a$ in the smooth subalgebra of the spectral triple  $C^*$-algebra. So, if we denote by $\Psi_D$ the  morphism  from 

\[
\Psi_D:K_0(\nctwosolenoid)  \to \Z
\]
induced by the pairing of  the $K$-homology class of  $D$  with a vector $C^*$-bundle bundle (this can be for example represented in KK-theory by the standard pairing of $K$-homology with $K$-theory), using the notation of Equation \eqref{eq:key ext}, we conclude that $\iota(\Z)$ must be  in the kernel of $\Psi_D$  (see the local even index  formula \cite{Connes} for a quick way to prove this; in particular see the commutator term $[\Dirac, p]$ which must be zero in this case). Thus, the index map $\Psi_D$ factors in the extension of Equation \eqref{eq:key ext}, i.e. $\Psi_D$ induces a group morphism from $\Z[1/p]$ to $\Z$. But there is only one such morphism: the 0 one.
Therefore, the index map for any even metric spectral triple over noncommutative solenoids must be zero. 

So  all constructions of even metric spectral triples on noncommutative solenoids we can make will have index map zero, making it very difficult to know whether they are actually trivial or not, as the only possible nonzero component  comes from the   $Ext$ group. 

For odd spectral triples the situation is similar, and we cannot even hope to use the local index theorem to show the nontriviality of the Dirac class since the $Hom$ term in the $K^1$-homology vanishes.

Therefore

\begin{proposition}\label{prp:no-K-hom-class} Any finitely summable  even or odd metric spectral triple on a noncommutative $2$-solenoid  has zero index map, and so  the only possible nonzero component of the  $K$-homology class associated to the Dirac operator  comes from the   $Ext$ groups.
\end{proposition}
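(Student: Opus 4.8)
The plan is to reduce the entire statement to the single algebraic fact that $\mathrm{Hom}(\Z[1/p],\Z)=0$, recorded above, by showing that the index map factors through the quotient $\Z[1/p]$ in the extension \eqref{eq:key ext}. Fix a finitely summable metric spectral triple with Dirac operator $D$ on $\nctwosolenoid$. In the even case I consider the induced homomorphism $\Psi_D:K_0(\nctwosolenoid)\to\Z$; under the universal coefficient sequence \eqref{eq:even case KK} this is precisely the image of $[D]\in KK_0(\nctwosolenoid,\C)$ in the term $\mathrm{Hom}(K_0(\nctwosolenoid),\Z)$, so proving $\Psi_D=0$ shows that $[D]$ lands in the $\mathrm{Ext}$ subgroup. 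The odd case I would dispose of first and separately: there the index map is the image of $[D]$ in $\mathrm{Hom}(K_1(\nctwosolenoid),\Z)$ under \eqref{eq:odd case KK}, and since $K_1(\nctwosolenoid)\cong\Z[1/p]^2$ we have $\mathrm{Hom}(\Z[1/p]^2,\Z)=0$ outright; no analysis is needed, and $[D]$ automatically lies in the $\mathrm{Ext}$ subgroup of \eqref{eq:odd case KK}.

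For the even case the heart of the matter is to evaluate $\Psi_D$ on the subgroup $\iota(\Z)$ of \eqref{eq:key ext}, whose generator $\iota(1)$ is the class $[1]$ of the unit. Here $[D,1]=0$ automatically, since the unit acts as the identity operator and commutes with $D$; in the metric language this is recorded as $L_D(1)=\opnorm{[D,1]}{}{}=0$. The idea is to feed this into the even local index formula, in which the pairing $\langle[D],[e]\rangle$ with a projection is a finite sum of residue cochains whose positive-degree components are multilinear in the commutators $[D,e]$; finite summability is exactly what makes such a local representative of the Chern character available. Setting $e=1$, every positive-degree term vanishes, so the pairing collapses to its degree-zero part. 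Granting that this degree-zero part also vanishes, one gets $\Psi_D(\iota(1))=\langle[D],[1]\rangle=0$, whence $\Psi_D$ annihilates $\iota(\Z)$ and descends to $\overline{\Psi_D}:\Z[1/p]\cong K_0(\nctwosolenoid)/\iota(\Z)\to\Z$. Since $\mathrm{Hom}(\Z[1/p],\Z)=0$ --- a homomorphism $\phi$ there satisfies $\phi(1)=p^n\phi(p^{-n})$ for all $n$, forcing $\phi(1)$ to be divisible by every power of $p$, hence $0$ --- we conclude $\overline{\Psi_D}=0$ and so $\Psi_D=0$.

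The hard part, and the step I expect to be the genuine obstacle, is the vanishing of the degree-zero term just granted. It is \emph{not} controlled by the commutator $[D,1]$ --- which is trivially zero for any spectral triple --- and by McKean--Singer it equals the Fredholm index $\mathrm{Index}(D_+)=\dim\ker D_+-\dim\ker D_-$ of the chiral part; a rank-one even module over $\C$ already has $[D,1]=0$ yet index $1$, so something beyond formal manipulation is required. Two routes seem available. The clean one is to prove directly that $\mathrm{Hom}(K_0(\nctwosolenoid),\Z)=0$, i.e. that the map $i$ in the sequence obtained by applying $\mathrm{Hom}(\,\cdot\,,\Z)$ to \eqref{eq:key ext} is injective --- equivalently, that the class of the extension \eqref{eq:key ext} has infinite order in $\mathrm{Ext}(\Z[1/p],\Z)\cong I_p/\Z$. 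Should this hold, then $\Psi_D=0$ for \emph{every} class, metric or not, and the local index formula becomes superfluous. The alternative, matching the stated hypotheses, is a solenoid-specific argument that every finitely summable metric $D$ has $\mathrm{Index}(D_+)=0$: for the explicit operator of \eqref{Our-spectral-triple-eq} this is immediate, since $|D|=M_{\mathds{L}}$ with $\mathds{L}\geq 1$ everywhere (as $\mathds{F}\geq 1$), so $D$ is invertible and has no kernel, whereas for a general $D$ one would exploit the simplicity of $\nctwosolenoid$ and its inductive-limit presentation by tori to preclude an asymmetric kernel. Deciding between these two routes --- in effect, computing $\ker(i)$ --- is the only substantive difficulty; everything else is either the $K$-homology computation already carried out or the elementary vanishing of $\mathrm{Hom}(\Z[1/p],\Z)$.
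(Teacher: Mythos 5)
Your reduction coincides with the paper's own route: the paper also disposes of the odd case outright via $\mathrm{Hom}(\Z[1/p]^2,\Z)=0$, and in the even case it, too, evaluates $\Psi_{\Dirac}$ on $\iota(\Z)$ using $[\Dirac,1]=0$ (from $L_{\Dirac}(1)=0$), invokes the local even index formula to conclude $\iota(\Z)\subseteq\ker\Psi_{\Dirac}$, and finishes with $\mathrm{Hom}(\Z[1/p],\Z)=0$. The difference is that where the paper asserts the vanishing (its parenthetical points at the commutator term $[\Dirac,p]$ being zero), you explicitly decline to grant it --- ``granting that this degree-zero part also vanishes'' --- and then leave the condition undischarged. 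That is a genuine gap: neither of your two proposed completions is carried out, so what you have is a conditional argument, not a proof. Worse, your route (1) cannot work uniformly in the cocycle: from the paper's own six-term sequence, $\mathrm{Hom}(K_0(\nctwosolenoid),\Z)=\ker(i)$, which is nonzero exactly when the class of the extension \eqref{eq:key ext} is torsion in $I_p/\Z$, and this does happen --- for the trivial cocycle, the K{\"u}nneth theorem gives $K_0\bigl(C(\solenoid_p\times\solenoid_p)\bigr)\cong\Z\oplus\bigl(\Z[1/p]\otimes\Z[1/p]\bigr)\cong\Z\oplus\Z[1/p]$, a split extension, so $\mathrm{Hom}(K_0,\Z)\cong\Z\neq 0$. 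Your route (2) is the one consonant with the stated hypotheses (it is where ``metric'' and ``finitely summable'' would have to enter), but you verify it only for the particular operator $\Dirac_\infty$ of Equation \eqref{Our-spectral-triple-eq}; the suggestion that simplicity of $\nctwosolenoid$ precludes an asymmetric kernel is not an argument, since $\ker\Dirac^{\pm}$ are not invariant subspaces for the algebra and simplicity has no direct purchase on them.

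That said, your diagnosis of \emph{where} the difficulty sits is accurate and worth recording: the pairing $\langle[\Dirac],[1]\rangle$ equals $\mathrm{Index}(\Dirac^+)=\dim\ker\Dirac^+-\dim\ker\Dirac^-$, which lives entirely in the degree-zero (kernel) term of the Chern character and is invisible to $[\Dirac,1]=0$. Your rank-one example over $\C$ already shows the bare inference ``commutators with $1$ vanish, hence the pairing with $[1]$ vanishes'' is invalid, and a unital, faithful, metric, finitely summable example is the Dolbeault triple $\bigl(C^\infty(S^2),L^2(\Lambda^{0,\ast}S^2),\bar\partial+\bar\partial^{\,\ast}\bigr)$, whose commutator seminorm is equivalent to the Lipschitz seminorm yet whose chiral index is $\chi(S^2,\mathcal{O}_{S^2})=1$. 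So any complete proof of the proposition must use solenoid-specific input at exactly the step you left open --- for instance, ruling out kernel asymmetry for finitely summable metric triples on $\nctwosolenoid$ --- and this is precisely the step the paper itself treats only with a parenthetical appeal to the index formula. In short: you have correctly reconstructed the paper's argument up to its crux and correctly identified its pressure point, but your proposal does not close it.
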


\providecommand{\href}[2]{#2}

\vfill

\end{document}